\newcommand{\swoosh}{\includegraphics[width=0.35in]{swoosh.pdf}}
\newcommand{\R}{\mathbb{R}}
\newcommand{\C}{\mathbb{C}}
\newcommand{\N}{\mathbb{N}}
\newcommand{\Hil}{\mathcal{H}}
\newcommand{\eps}{\varepsilon}
\DeclareMathOperator{\lspan}{span}
\DeclareMathOperator{\ran}{ran}
\providecommand{\abs}[1]{\lvert#1\rvert}
\providecommand{\norm}[1]{\lVert#1\rVert}
\newcounter{Theorem}
\numberwithin{equation}{section}
\numberwithin{Theorem}{section}
\theoremstyle{plain} 
\newtheorem{thm}[Theorem]{Theorem}
\newtheorem{lem}[Theorem]{Lemma}
\newtheorem{prop}[Theorem]{Proposition}
\theoremstyle{definition}
\newtheorem{defn}[Theorem]{Definition}
\theoremstyle{remark}
\newtheorem{ex}[Theorem]{Example}
\begin{document}

\title{Tight projections of frames on infinite dimensional Hilbert spaces}

\author{John Jasper}

\date{}

\begin{abstract} We characterize the frames on an infinite dimensional separable Hilbert space that can be projected to a tight frame for an infinite dimensional subspace. A result of Casazza and Leon states that an arbitrary frame for a $2N$-- or $(2N-1)$--dimensional Hilbert space can be projected to a tight frame for an $N$--dimensional subspace. Surprisingly, we demonstrate a large class of frames for infinite dimensional Hilbert spaces which cannot be projected to a tight frame for any infinite dimensional subspace.
\end{abstract}

\address{Department of Mathematics, University of Missouri, Columbia, MO 65211--4100, USA}

\email{jasperj@missouri.edu}

\thanks{The author was supported by NSF ATD 1042701}

\keywords{frames, projections of frames, compressions}

\subjclass[2010]{Primary: 46C05, Secondary: 47A20}
\date{\today}
\date{}

\maketitle

\section{Introduction}

A sequence of vectors $\{f_{i}\}_{i\in I}$ in a Hilbert space $\Hil$ is called a \textit{frame} for $\Hil$ if there exist constants $0<A\leq B<\infty$ such that
\begin{equation}\label{frameineq}A\norm{f}^{2}\leq\sum_{i\in I}\abs{\langle f,f_{i}\rangle}^{2}\leq B\norm{f}^{2}\end{equation}
for all $f\in\Hil$. The numbers $A$ and $B$ are called the \textit{frame bounds}. If $A=B$, then $\{f_{i}\}$ is called a {\it tight frame}. The \textit{frame operator} $S:\Hil\to\Hil$ is given by 
\[Sf = \sum_{i\in I}\langle f,f_{i}\rangle f_{i}\quad\text{for all }f\in\Hil.\]

The crucial property of frames that makes them useful in practice is their basis--like reconstruction formula. That is, given a frame $\{f_{i}\}_{i\in I}$ for a Hilbert space $\Hil$ and any $f\in\Hil$ we have $f=\sum_{i\in I} \langle f,S^{-1}f_{i}\rangle f_{i}$, where $S$ is the frame operator of $\{f_{i}\}$. Since it may be difficult to invert $S$, this reconstruction formula may be of little use. For this reason we often concentrate on tight frames. Indeed, if $\{f_{i}\}$ is a tight frame with frame bound $A$, then for every $f\in\Hil$ we have the simple reconstruction formula $f=A^{-1}\sum \langle f,f_{i}\rangle f_{i}$.

A common problem in frame theory can be stated as follows: given a frame $\{f_{i}\}$, find a tight frame that retains some of the structure of $\{f_{i}\}$. One example is the problem of scalable frames. Two recent papers \cite{ckl,kop} have given characterizations of frames $\{f_{i}\}$ such that $\{c_{i}f_{i}\}$ is a tight frame for some sequence of positive scalars $\{c_{i}\}$. Another example is frame completions, in which vectors are added to a frame so that the resulting set of vectors is a tight frame, see \cite{fmp,mrs}.

In \cite{cl} Casazza and Leon considered the problem of projecting a given frame onto a subspace such that the projected vectors form a tight frame for the subspace. They showed that if $\{f_{i}\}_{i=1}^{M}$ is a frame for a $2N$-- or $(2N-1)$--dimensional Hilbert space, then there exists a projection $P$ onto an $N$-dimensional subspace such that $\{Pf_{i}\}_{i=1}^{M}$ is a tight frame for the image of $P$. In this paper we will characterize the frames on an infinite dimensional separable Hilbert space which can be projected to a tight frame for an infinite dimensional subspace. Specifically, we will prove the following.

\begin{thm}\label{frame} Let $\{f_{i}\}_{i\in\N}$ be a frame for a separable infinite dimensional Hilbert space $\Hil$. There is a projection $P$ onto an infinite dimensional subspace $\Hil_{0}$ such that $\{Pf_{i}\}_{i\in\N}$ is a tight frame for $\Hil_{0}$ if and only if the frame operator of $\{f_{i}\}$ is not a translate of a compact operator whose positive or negative part has finite dimensional kernel.
\end{thm}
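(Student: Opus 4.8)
The plan is to reinterpret the condition ``$\{Pf_i\}$ is a tight frame for $\Hil_0$'' in terms of the \emph{compression} of the frame operator, and then to study such compressions through the essential spectrum. Write $S$ for the frame operator of $\{f_i\}$ and, for a closed subspace $\Hil_0$ with orthogonal projection $P$, note the elementary identity that the frame operator of the Bessel sequence $\{Pf_i\}$ in $\Hil_0$ is the compression $f\mapsto PSPf$ restricted to $\Hil_0$. Hence $\{Pf_i\}$ is a tight frame for $\Hil_0$ if and only if $PSP|_{\Hil_0}=c\,I_{\Hil_0}$ for some $c>0$, equivalently, there is an orthonormal sequence $\{e_n\}_{n\in\N}$ in $\Hil$ (spanning $\Hil_0$) with $\langle Se_m,e_n\rangle=c\,\delta_{mn}$ for all $m,n$. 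The theorem is thus equivalent to: such a $c>0$ and $\{e_n\}$ exist if and only if $S$ is not of the form $cI+K$ with $K$ compact self-adjoint and $K^+$ or $K^-$ having finite-dimensional kernel. I would also record at the outset that $S$ is positive and invertible, so $\sigma_{\mathrm{ess}}(S)$ is a nonempty compact subset of $(0,\infty)$, and that (for self-adjoint $S$) one has $S=cI+K$ with $K$ compact precisely when $\sigma_{\mathrm{ess}}(S)=\{c\}$.

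\emph{Case 1: $\sigma_{\mathrm{ess}}(S)$ is not a single point.} Then $S$ is not a translate of a compact operator, so the right-hand side of the theorem holds and I must exhibit the projection. Set $a=\min\sigma_{\mathrm{ess}}(S)$, $b=\max\sigma_{\mathrm{ess}}(S)$, fix any $c\in(a,b)$, and build $\{e_n\}$ inductively. Given orthonormal $e_1,\dots,e_{n-1}$ with $\langle Se_j,e_k\rangle=c\,\delta_{jk}$, put $V=\lspan\{e_1,\dots,e_{n-1},Se_1,\dots,Se_{n-1}\}$, a finite-dimensional space, and let $S_V$ be the compression of $S$ to $V^\perp$. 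Since $PSP$ differs from $S$ by a finite-rank operator and $PSP=0\oplus S_V$ relative to $\Hil=V\oplus V^\perp$, Weyl's theorem gives $\sigma_{\mathrm{ess}}(S_V)=\sigma_{\mathrm{ess}}(S)$; in particular $a,b\in\sigma(S_V)$. As $S_V$ is self-adjoint and $a<c<b$, the numerical range of $S_V$ contains points below and above $c$, so by its convexity (Toeplitz--Hausdorff) together with a routine continuity/intermediate-value argument there is a unit vector $e_n\in V^\perp$ with $\langle Se_n,e_n\rangle=c$. By the choice of $V$, $e_n$ is orthogonal to $e_1,\dots,e_{n-1}$ and $\langle Se_j,e_n\rangle=0$ for $j<n$, so the induction continues and produces the desired sequence.

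\emph{Case 2: $\sigma_{\mathrm{ess}}(S)=\{c\}$, so $K:=S-cI$ is compact self-adjoint.} The key preliminary observation is that if $\{e_n\}$ satisfies $\langle Se_m,e_n\rangle=c'\,\delta_{mn}$ then necessarily $c'=c$: the $e_n$ converge weakly to $0$, hence $\langle Ke_n,e_n\rangle\to 0$ by compactness of $K$, and $c'=c+\langle Ke_n,e_n\rangle$ is constant, forcing $c'=c$. Consequently $\langle Ke_m,e_n\rangle=0$ for all $m,n$, and (by polarization for the off-diagonal entries) the existence of the projection is equivalent to the existence of an infinite-dimensional subspace $\Hil_0$ on which the quadratic form $x\mapsto\langle Kx,x\rangle$ vanishes identically. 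The crux is then an operator-theoretic lemma, which I would isolate and prove: \emph{a compact self-adjoint operator $K$ admits an infinite-dimensional subspace on which its quadratic form vanishes if and only if neither $K^+$ nor $K^-$ has finite-dimensional kernel.} For the ``if'' direction, if $\ker K$ is infinite-dimensional any infinite-dimensional subspace of $\ker K$ works; otherwise both the positive eigenspace $\Hil_+$ and negative eigenspace $\Hil_-$ of $K$ are infinite-dimensional, and pairing orthonormal eigenvectors $p_n$ (eigenvalue $\lambda_n>0$) with $q_n$ (eigenvalue $-\mu_n<0$) and setting $e_n=(\sqrt{\mu_n}\,p_n+\sqrt{\lambda_n}\,q_n)/\sqrt{\lambda_n+\mu_n}$ yields an orthonormal sequence whose closed span is isotropic. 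For the ``only if'' direction, suppose, say, $\ker K^+=\Hil_-\oplus\ker K$ is finite-dimensional, and let $\Hil_0$ be isotropic; on the finite-codimension subspace $\{x\in\Hil_0:P_{\Hil_-}x=0\}$ one has $\langle K^+x,x\rangle=\langle Kx,x\rangle=0$, hence $x\in\ker K^+$, and together with $P_{\Hil_-}x=0$ this forces $x\in\ker K$, so $\Hil_0$ is finite-dimensional. Comparing the lemma with the dichotomy of Case 2 finishes the proof.

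The step I expect to be the main obstacle is the ``only if'' half of the lemma in Case 2 --- the rigidity assertion that when $K^+$ or $K^-$ has finite-dimensional kernel there is \emph{no} infinite-dimensional isotropic subspace --- since it is the one place where one must rule something out rather than build it, and it is where the precise hypothesis of the theorem (together with the weak-limit argument pinning $c'$ to $c$) does its work. The essential-spectrum bookkeeping in Case 1 --- that compressing to a co-finite-dimensional subspace leaves $\sigma_{\mathrm{ess}}$ unchanged, and that any value strictly between $\min\sigma_{\mathrm{ess}}(S)$ and $\max\sigma_{\mathrm{ess}}(S)$ is actually attained by the numerical range of the compression --- is routine but must be handled with some care, especially the attainment (as opposed to mere membership) of $c$.
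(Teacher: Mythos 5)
Your proposal is correct, and its main construction takes a genuinely different route from the paper's. The paper reduces the frame statement to the operator statement $PSP=\alpha P$ exactly as you do (Proposition \ref{impl}), but then proves the positive direction by a case split on diagonalizability: for diagonalizable $S$ it pairs eigenvalues lying on opposite sides of a suitable $\alpha$ (Lemma \ref{block}, Theorem \ref{diag}), and for nondiagonalizable $S$ it invokes the spectral theorem to realize $S$ as a multiplication operator, partitions the preimages of two balls around essential-range points into sets of finite positive measure (Lemma \ref{part}, Theorem \ref{cont}), and reduces back to the diagonalizable case. Your Case 1 replaces all of this with a single inductive construction: Weyl invariance of $\sigma_{\mathrm{ess}}$ under finite-rank compressions plus convexity of the numerical range yields at each step a unit vector in $V^{\perp}$ with $\langle Se_{n},e_{n}\rangle=c$; this treats diagonalizable and nondiagonalizable operators simultaneously and avoids spectral multiplicity theory, and the attainment of $c$ is indeed unproblematic since for self-adjoint $S_{V}$ the numerical range is an interval with infimum $\leq a<c$ and supremum $\geq b>c$. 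For the negative direction the paper argues through operator ideals (Theorem \ref{cpt}: if $PKP=(\alpha-\beta)P$ with $\alpha\neq\beta$ then $P$ is compact, and if $PKP=0$ then Lemma \ref{frk} forces $P$ finite rank), whereas you first pin the tight constant to the unique essential-spectrum point via weak-nullity of orthonormal sequences and then prove an isotropic-subspace rigidity lemma; both arguments rest on the same positivity mechanism, and the ``if'' half of your lemma is precisely the paper's pairing construction specialized to $K$ with target value $0$. What the paper's route buys is the operator-level Theorem \ref{oper} for an arbitrary positive noncompact $E$ together with explicit constructions; what yours buys is brevity and uniformity at the cost of invoking Weyl's theorem and Toeplitz--Hausdorff. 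Two small points are worth making explicit when you write this up: the representation $S=cI+K$ with $K$ compact is unique (so in Case 2 ``translate of an exceptional compact'' is equivalent to this particular $K$ lying in $FK(\Hil)$), and $c>0$ throughout because the frame operator is bounded below.
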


In the finite dimensional case, any frame can be projected onto a tight frame for a subspace of approximately half the dimension of the original space. Thus it is natural to expect that frames in infinite dimensional spaces can be projected onto a tight frame for an infinite dimensional subspace. Surprisingly, there are frames for infinite dimensional Hilbert spaces that cannot be projected onto a tight frame for any infinite dimensional subspace, see Example \ref{noproj}.

Also in \cite{cl} it was shown that there are frames in $2N$-- and $(2N-1)$--dimensional spaces that cannot be projected onto tight frames for any subspace of dimension larger than $N$. Since the projection in Theorem \ref{frame} is already onto an infinite dimensional subspace, for a ``larger'' subspace we look at those with finite codimension. In Theorem \ref{finker} we show that a frame can be projected onto a tight frame for a subspace with finite codimension if and only if the frame operator is already a multiple of the identity on a subspace with finite codimension. Thus, apart from these exceptional frames, the result in Theorem \ref{frame} is optimal.

Theorem \ref{frame} will follow from the following more general statement about operators.

\begin{thm}\label{oper} Let $E$ be a positive noncompact operator on a separable infinite dimensional Hilbert space $\Hil$. There is a projection $P$ onto an infinite dimensional subspace and a constant $\alpha> 0$ such that $PEP=\alpha P$ if and only if $E$ is not a translate of a compact operator whose positive or negative part has finite dimensional kernel.
\end{thm}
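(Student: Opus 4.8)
The plan is to prove the two implications separately, and to split the construction (``if'') direction according to whether $\sigma_{\mathrm{ess}}(E)$ is a single point. Throughout I would use that $\sigma_{\mathrm{ess}}(E)$ is a nonempty compact subset of $[0,\infty)$ and that $E$ is a translate of a compact operator exactly when $\sigma_{\mathrm{ess}}(E)$ is a single point $\{\beta\}$, in which case $K:=E-\beta I$ is the only possible compact part.

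For the ``only if'' direction I would argue by contraposition. Suppose $E=\beta I+K$ with $K$ compact and, say, $\ker K_{+}$ finite-dimensional (the case of $\ker K_{-}$ is symmetric), and suppose $PEP=\alpha P$ for some projection $P$ onto an infinite-dimensional subspace $\Hil_{0}$ and some $\alpha>0$. Then $PKP=(\alpha-\beta)P$; since $PKP$ is compact while an infinite-rank projection is not, this forces $\alpha=\beta$ and $PKP=0$, hence $PK_{+}P=PK_{-}P$ as positive operators on $\Hil_{0}$. Because $\ker K_{+}$ is finite-dimensional, $K$ has only finitely many negative eigenvalues, so $K_{-}$, and therefore $PK_{-}P=PK_{+}P$, has finite rank; thus the kernel of $PK_{+}P$ inside $\Hil_{0}$ is infinite-dimensional, and for every $x$ in it $0=\langle PK_{+}Px,x\rangle=\langle K_{+}x,x\rangle=\norm{K_{+}^{1/2}x}^{2}$, so $x\in\ker K_{+}$ --- contradicting that $\ker K_{+}$ is finite-dimensional.

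For the ``if'' direction, first suppose $\sigma_{\mathrm{ess}}(E)$ contains two distinct points $p<q$ and fix $\alpha\in(p,q)$; since $p\ge0$ we have $\alpha>0$. I would construct an orthonormal sequence $\{x_{n}\}$ with $\langle Ex_{n},x_{m}\rangle=\alpha\delta_{nm}$ for all $n,m$, so that the projection $P$ onto $\overline{\lspan\{x_{n}\}}$ satisfies $PEP=\alpha P$. The construction is inductive: given $x_{1},\dots,x_{n}$, let $W_{n}$ be the finite-codimensional orthogonal complement of $\lspan\{x_{i},Ex_{i}:i\le n\}$, so that any unit vector $x_{n+1}\in W_{n}$ with $\langle Ex_{n+1},x_{n+1}\rangle=\alpha$ continues the sequence --- indeed $x_{n+1}$ is then orthogonal to $x_{i}$ and to $Ex_{i}$, and $E$ is self-adjoint, so $\langle Ex_{i},x_{n+1}\rangle=0$ and $\langle Ex_{n+1},x_{i}\rangle=\langle x_{n+1},Ex_{i}\rangle=0$ for $i\le n$. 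To produce such an $x_{n+1}$, choose $\delta>0$ small enough that $[p-\delta,p+\delta]$ and $[q-\delta,q+\delta]$ are disjoint and lie on the appropriate sides of $\alpha$; since $p,q\in\sigma_{\mathrm{ess}}(E)$ the spectral subspaces $\ran\chi_{[p-\delta,p+\delta]}(E)$ and $\ran\chi_{[q-\delta,q+\delta]}(E)$ are infinite-dimensional, hence each meets the finite-codimensional $W_{n}$; choosing unit vectors $y_{1},y_{2}$ in these intersections gives $y_{1}\perp y_{2}$, both in $W_{n}$, with $\langle Ey_{1},y_{1}\rangle<\alpha<\langle Ey_{2},y_{2}\rangle$, and the intermediate value theorem applied to $\theta\mapsto\langle E(\cos\theta\,y_{1}+\sin\theta\,y_{2}),\cos\theta\,y_{1}+\sin\theta\,y_{2}\rangle$ on $[0,\pi/2]$ yields the desired $x_{n+1}$.

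It remains to treat the case $\sigma_{\mathrm{ess}}(E)=\{\beta\}$, where $K:=E-\beta I$ is compact, $\beta>0$ since $E$ is noncompact, and --- by the hypothesis that $E$ is not such a translate --- both $\ker K_{+}$ and $\ker K_{-}$ are infinite-dimensional. If $\ker K$ is infinite-dimensional, take $\Hil_{0}=\ker K$, which is $E$-invariant with $E|_{\Hil_{0}}=\beta I$. Otherwise $\ker K$ is finite-dimensional, which together with the infinite-dimensionality of $\ker K_{\pm}$ forces $K$ to have infinitely many positive eigenvalues $\lambda_{1}\ge\lambda_{2}\ge\cdots>0$ and infinitely many negative eigenvalues $-\mu_{1}\le-\mu_{2}\le\cdots<0$, with orthonormal eigenvectors $e_{k}^{\pm}$; then $x_{k}:=\sqrt{t_{k}}\,e_{k}^{+}+\sqrt{1-t_{k}}\,e_{k}^{-}$ with $t_{k}=\mu_{k}/(\lambda_{k}+\mu_{k})$ is an orthonormal sequence satisfying $\langle Ex_{k},x_{j}\rangle=\beta\delta_{kj}$. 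In either subcase $\alpha=\beta$ works. I expect the main obstacle to be the first case of the ``if'' direction: one must keep the inductively chosen vectors orthogonal both before and after applying $E$ (this is what forces the passage to $W_{n}$), and one must use that spectral subspaces over small neighborhoods of points of $\sigma_{\mathrm{ess}}(E)$ are infinite-dimensional so that they survive intersection with the finite-codimensional $W_{n}$; the remaining steps are routine compactness and intermediate-value arguments.
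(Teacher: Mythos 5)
Your proposal is correct, but your constructive (``if'') direction takes a genuinely different route from the paper. The paper realizes $E$ as a multiplication operator $M_{\phi}$ via the spectral theorem, splits the measure space into the atomic part (preimages of eigenvalues) and its complement, proves the diagonalizable case separately (Theorem \ref{diag}: find $\alpha>0$ with infinitely many eigenvalues on each side, then pair them via Lemma \ref{block}), and reduces the continuous part to the diagonalizable case by partitioning the preimages of two disjoint balls around essential-range points into infinitely many sets of finite positive measure (Lemma \ref{part}, Theorem \ref{cont}). You avoid spectral multiplicity theory entirely: when $\sigma_{\mathrm{ess}}(E)$ has two points you inductively build an orthonormal sequence $\{x_{n}\}$ with $\langle Ex_{n},x_{m}\rangle=\alpha\delta_{nm}$, using that spectral projections of small intervals around essential-spectrum points have infinite rank (so they meet the finite-codimensional complements $W_{n}$ of $\lspan\{x_{i},Ex_{i}\}$) together with an intermediate value argument; when $\sigma_{\mathrm{ess}}(E)=\{\beta\}$ your pairing of positive and negative eigenvalues of $K=E-\beta$ reproduces in essence the paper's Theorem \ref{diag}. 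Your ``only if'' argument is the paper's Theorem \ref{cpt} in streamlined form: in place of Lemma \ref{frk} you observe that the kernel of the finite-rank operator $PK_{+}P$ restricted to $\ran P$ is infinite dimensional and is contained in $\ker K_{+}$, which is cleaner. What each approach buys: the paper's route is explicit, parallels the finite-dimensional Casazza--Leon construction, and produces reusable lemmas (\ref{block}, \ref{blockn}, \ref{part}); yours is shorter and more operator-theoretic, but it leans on standard facts about the essential spectrum of self-adjoint operators (singleton essential spectrum iff a compact translate, and $\lambda\in\sigma_{\mathrm{ess}}(E)$ iff every neighborhood of $\lambda$ carries an infinite-rank spectral projection), which should be stated with a reference or a brief proof in a final write-up.
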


The paper is organized as follows. In Section 2 we show that Theorem \ref{frame} follows from Theorem \ref{oper}. In section 3 we prove Theorem \ref{oper} for diagonalizable operators. We also show the nonexistence of a projection $P$ for the exceptional operators in the statement of Theorem \ref{oper}. In Section 4 we prove Theorem \ref{oper} for nondiagonalizable operators. Combining the nondiagonalizable and diagonalizable statements, we prove Theorem \ref{oper}.

\section{Preliminaries}

\begin{defn} Let $E$ be a self-adjoint operator. There exist unique positive operators $E_{+}$ and $E_{-}$, called the {\it positive part} and {\it negative part} respectively, such that $E=E_{+}-E_{-}$. 
\end{defn}

In the remaining sections we must repeatedly refer to compact operators $K$ such that $\dim\ker(K_{+})<\infty$ or $\dim\ker(K_{-})<\infty$. Thus we reluctantly introduce the following notation.

\begin{defn} Let $\Hil$ be an infinite dimensional Hilbert space. Let $\mathcal{B}_{0}(\Hil)$ denote the compact operators on $\Hil$. Define the set
\[FK(\Hil) = \{K\in\mathcal{B}_{0}(\Hil): K=K^{\ast}\text{ and either } \dim\ker(K_{+})<\infty\text{ or }\dim\ker(K_{-})<\infty\}.\]
\end{defn}

If $E$ is self-adjoint and diagonalizable, then there is an orthonormal basis of eigenvectors $\{e_{i}\}_{i\in I}$ of $E$. Let $\{\lambda_{i}\}_{i\in I}$ be the corresponding eigenvalues. In this case, the positive and negative parts are given by
\[E_{+}f = \sum_{\{i:\lambda_{i}>0\}}\lambda_{i}\langle f,e_{i}\rangle e_{i}\quad\text{and}\quad E_{-}f = -\sum_{\{i:\lambda_{i}<0\}}\lambda_{i}\langle f,e_{i}\rangle e_{i}.\]
From this we see that a compact self-adjoint operator $K$ is in $FK(\Hil)$ if and only if it has either finitely many nonnegative or nonpositive eigenvalues (with multiplicity). Moreover, since $K$ is an operator on an infinite dimensional space, it has either infinitely many positive or negative eigenvalues (without multiplicity).

The main result of this section, Proposition \ref{impl}, shows that Theorem \ref{oper} implies Theorem \ref{frame}. We require the following standard fact from frame theory, see \cite{ch}.

\begin{prop}\label{recon} A sequence $\{f_{i}\}_{i\in I}$ is a tight frame for $\Hil$ with frame bound $\alpha$ if and only if for each $f\in \Hil$
\[f = \frac{1}{\alpha}\sum_{i\in I}\langle f,f_{i}\rangle f_{i}.\]
\end{prop}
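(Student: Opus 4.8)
The plan is to route everything through the frame operator $Sf = \sum_{i\in I}\langle f,f_i\rangle f_i$ and its associated quadratic form $f\mapsto\langle Sf,f\rangle = \sum_{i\in I}\abs{\langle f,f_i\rangle}^{2}$. First I would recall that a tight frame is in particular a frame, so the right-hand inequality in \eqref{frameineq} holds; this Bessel condition is exactly what guarantees that the series defining $Sf$ converges (unconditionally) for every $f\in\Hil$ and that $S$ is a bounded, positive, self-adjoint operator with $\langle Sf,f\rangle = \sum_{i\in I}\abs{\langle f,f_i\rangle}^{2}$ for all $f$.

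For the forward direction I would argue as follows. If $\{f_i\}_{i\in I}$ is tight with bound $\alpha$, then $\langle Sf,f\rangle = \sum_{i\in I}\abs{\langle f,f_i\rangle}^{2} = \alpha\norm{f}^{2} = \langle\alpha f,f\rangle$ for all $f\in\Hil$, so the self-adjoint operator $S-\alpha I$ has identically vanishing quadratic form and is therefore the zero operator. Hence $\alpha f = Sf = \sum_{i\in I}\langle f,f_i\rangle f_i$, which upon dividing by $\alpha$ is precisely the asserted reconstruction formula.

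For the converse I would start from the assumed identity $f = \frac{1}{\alpha}\sum_{i\in I}\langle f,f_i\rangle f_i$, which in particular says the series converges in norm. Taking the inner product of both sides with $f$, and using continuity and conjugate-linearity of $\langle\cdot,\cdot\rangle$ to pass the inner product inside the sum, gives $\norm{f}^{2} = \frac{1}{\alpha}\sum_{i\in I}\langle\langle f,f_i\rangle f_i,f\rangle = \frac{1}{\alpha}\sum_{i\in I}\abs{\langle f,f_i\rangle}^{2}$, i.e.\ $\sum_{i\in I}\abs{\langle f,f_i\rangle}^{2} = \alpha\norm{f}^{2}$ for every $f$. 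This is exactly \eqref{frameineq} with $A=B=\alpha$, so $\{f_i\}$ is a frame and it is tight with bound $\alpha$.

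The only delicate point — and it is a mild one — is justifying the interchange of the (possibly infinite) sum with the inner product in the converse; I would handle it by writing the series as the norm-limit of its finite partial sums, on which linearity is trivial, and invoking continuity of the inner product. The same observation underlies the forward direction, where it is the Bessel bound that makes $S$ well defined in the first place, before one evaluates its quadratic form.
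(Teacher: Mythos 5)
Your proof is correct, and in fact the paper offers no proof of this proposition at all --- it is stated as a standard fact with a citation to \cite{ch}. Your argument (identifying the tightness condition with the quadratic form of the frame operator $S$, using self-adjointness/polarization to conclude $S=\alpha I$ in the forward direction, and continuity of the inner product on norm-convergent partial sums in the converse) is exactly the standard argument that reference supplies, with the one genuinely delicate point --- interchanging the sum with the inner product --- correctly identified and handled.
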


\begin{prop}\label{impl} Let $\{f_{i}\}_{i\in I}$ be a frame for a Hilbert space $\Hil$ with frame operator $S$, and let $P$ be a projection. The sequence $\{Pf_{i}\}_{i\in I}$ is a tight frame for $P\Hil$ if and only if there is some $\alpha>0$ such that $PSP=\alpha P$.
\end{prop}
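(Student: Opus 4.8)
The plan is to reduce the statement to the reconstruction criterion of Proposition~\ref{recon} by identifying the frame operator of the projected sequence. Here, as usual, $P$ denotes an orthogonal projection, so $P=P^{\ast}=P^{2}$, and we may assume $P\Hil\neq\{0\}$ (otherwise the statement is vacuous). First I would observe that $\{Pf_{i}\}_{i\in I}$ is automatically a frame for $P\Hil$: for $g\in P\Hil$ one has $\langle g,Pf_{i}\rangle=\langle Pg,f_{i}\rangle=\langle g,f_{i}\rangle$ since $Pg=g$, so that $\sum_{i\in I}\abs{\langle g,Pf_{i}\rangle}^{2}=\sum_{i\in I}\abs{\langle g,f_{i}\rangle}^{2}$ lies between $A\norm{g}^{2}$ and $B\norm{g}^{2}$, where $A\le B$ are the frame bounds of $\{f_{i}\}$. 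In particular, the series $\sum_{i\in I}\langle g,f_{i}\rangle f_{i}$ converges to $Sg$ for every $g\in\Hil$, and since $P$ is bounded it may be moved inside this sum.

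The key step is the computation of the frame operator $S_{P}$ of $\{Pf_{i}\}_{i\in I}$, regarded as a sequence in the Hilbert space $P\Hil$. For $g\in P\Hil$,
\[
S_{P}g=\sum_{i\in I}\langle g,Pf_{i}\rangle Pf_{i}=\sum_{i\in I}\langle g,f_{i}\rangle Pf_{i}=P\sum_{i\in I}\langle g,f_{i}\rangle f_{i}=PSg=PSPg,
\]
the last equality because $g=Pg$. Hence $S_{P}$ is precisely the restriction of the operator $PSP$ to the subspace $P\Hil$.

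It remains to combine these observations. By Proposition~\ref{recon}, $\{Pf_{i}\}_{i\in I}$ is a tight frame for $P\Hil$ with bound $\alpha$ if and only if $S_{P}g=\alpha g$ for every $g\in P\Hil$, i.e. $PSPg=\alpha g$ for all $g\in P\Hil$. If this holds, then for an arbitrary $h\in\Hil$ the vector $Ph$ lies in $P\Hil$, so $PSPh=PSP(Ph)=\alpha Ph$; thus $PSP=\alpha P$ on all of $\Hil$, and $\alpha>0$ because the lower frame bound of the frame $\{Pf_{i}\}$ for the nonzero space $P\Hil$ is positive. Conversely, if $PSP=\alpha P$ for some $\alpha>0$, then for $g\in P\Hil$ we get $S_{P}g=PSPg=\alpha Pg=\alpha g$, so $\{Pf_{i}\}$ is a tight frame for $P\Hil$ with bound $\alpha$. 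There is no genuine obstacle in this argument; the only points requiring care are the justification for passing $P$ through the infinite series (Bessel's inequality together with boundedness of $P$) and keeping straight the distinction between $S_{P}$ as an operator on $P\Hil$ and the operator $PSP$ on the ambient space $\Hil$.
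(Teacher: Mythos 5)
Your proof is correct and follows essentially the same route as the paper: establish the frame inequality for $\{Pf_{i}\}$ on $P\Hil$ and then apply Proposition~\ref{recon} via the computation $\sum_{i}\langle Pf,Pf_{i}\rangle Pf_{i}=PSPf$. The only differences are cosmetic — you name the projected frame operator $S_{P}$ and spell out the passage between ``$PSPg=\alpha g$ on $P\Hil$'' and ``$PSP=\alpha P$ on $\Hil$,'' which the paper leaves implicit.
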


\begin{proof} First, we will show that $\{Pf_{i}\}$ is a frame for $P\Hil$. Let $f\in P\Hil$, so $Pf = f$. Then,
\begin{equation}\label{imp1}\sum_{i\in I}\abs{\langle f,Pf_{i}\rangle}^{2} = \sum_{i\in I}\abs{\langle Pf,f_{i}\rangle}^{2} = \sum_{i\in I}\abs{\langle f,f_{i}\rangle}^{2}.\end{equation}
Since $\{f_{i}\}$ is a frame, there exist constants $0<A\leq B<\infty$ so that the last expression in \eqref{imp1} is bounded above and below by $A\norm{f}^{2}$ and $B\norm{f}^{2}$ respectively. Thus the frame inequality \eqref{frameineq} holds for all $f\in P\Hil$. 

By Proposition \ref{recon}, $\{Pf_{i}\}$ is a tight frame for $P\Hil$ with frame bound $\alpha$ if and only if for each $f\in \Hil$ we have
\[\alpha Pf = \sum_{i\in I}\langle Pf,Pf_{i}\rangle Pf_{i} = P\left(\sum_{i\in I}\langle Pf,f_{i}\rangle f_{i}\right) = PSPf.\]
\end{proof}

\section{Diagonalizable operators}

We begin with a lemma that generalizes \cite[Theorem 2.3]{cl} to diagonalizable operators on an infinite dimensional space. See Lemma \ref{blockn} for another generalization. The proof is a straightforward modification of that in \cite{cl}. However, since it is short, we include it.

\begin{lem}\label{block} Let $E$ be a diagonalizable normal operator on a separable Hilbert space $\Hil$. Let $\{e_{i}\}_{i\in I}$ be an orthonormal basis of eigenvectors of $E$ with corresponding eigenvalues $\{\lambda_{i}\}_{i\in I}$. Let $\{\sigma_{j}\}_{j\in J}$ be disjoint subsets of $I$. For each $j\in J$ let $\{a_{i}\}_{i\in\sigma_{j}}$ be a sequence of scalars such that $\sum_{i\in\sigma_{j}}\abs{a_{i}}^{2} = 1$, and set $f_{j} = \sum_{i\in\sigma_{j}}a_{i}e_{i}$. Set $\Hil_{0}=\overline{\lspan}\{f_{j}\}_{j\in J}$, and let $P$ be the projection onto $\Hil_{0}$. Then, $\{f_{j}\}_{j\in J}$ is an orthonormal basis for $\Hil_{0}$, and for each $j\in J$ we have $PEPf_{j} = \eta_{j}f_{j}$, where $\eta_{j} = \sum_{i\in\sigma_{j}}\abs{a_{i}}^{2}\lambda_{i}$.
\end{lem}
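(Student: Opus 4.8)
The plan is to verify the two assertions of Lemma~\ref{block} directly: first that $\{f_j\}_{j\in J}$ is an orthonormal basis for $\Hil_0$, and second that each $f_j$ is an eigenvector of $PEP$ with the stated eigenvalue. For orthonormality, I would compute $\langle f_j, f_k\rangle$ for $j,k\in J$ by expanding in the eigenbasis: since $\{e_i\}$ is orthonormal and the $\sigma_j$ are disjoint, for $j\neq k$ we get $\langle f_j,f_k\rangle = 0$ immediately because the index sets over which the sums range do not overlap, and for $j=k$ we get $\langle f_j,f_j\rangle = \sum_{i\in\sigma_j}\abs{a_i}^2 = 1$ by hypothesis. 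Since by definition $\Hil_0 = \overline{\lspan}\{f_j\}$, the family $\{f_j\}$ is by construction a complete orthonormal system in $\Hil_0$, i.e.\ an orthonormal basis.

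For the eigenvalue computation, the key point is to track how $E$ and then $P$ act on $f_j$. Because each $e_i$ with $i\in\sigma_j$ is an eigenvector, $Ef_j = \sum_{i\in\sigma_j}a_i\lambda_i e_i$. Now I would apply $P$, the orthogonal projection onto $\Hil_0 = \overline{\lspan}\{f_k\}_{k\in J}$. Here I need to compute $\langle Ef_j, f_k\rangle$ for each $k$: by disjointness of the $\sigma_k$'s this inner product vanishes unless $k=j$, and for $k=j$ it equals $\sum_{i\in\sigma_j}\abs{a_i}^2\lambda_i =: \eta_j$. Hence $PEf_j = \eta_j f_j$. Applying $P$ once more (and using that $f_j\in\Hil_0$ so $Pf_j = f_j$), we conclude $PEPf_j = PEf_j = \eta_j f_j$, as claimed. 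One small point worth stating carefully is that $PEf_j$ genuinely lies in $\Hil_0$ and equals $\eta_j f_j$ rather than merely having that component — this follows since $P$ maps into $\Hil_0$ and we have computed all of its coordinates against the orthonormal basis $\{f_k\}$.

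I do not anticipate a serious obstacle here; the lemma is essentially a bookkeeping argument exploiting the disjointness of the supports $\sigma_j$ together with the fact that the $e_i$ are genuine eigenvectors. The only mild subtlety is the interchange of infinite sums with the inner product and with the (bounded) operators $E$ and $P$, which is justified by continuity since $E$ is bounded and each $f_j$ is a norm-convergent series (its coefficients are square-summable); this is the kind of routine justification I would mention in a line rather than belabor.
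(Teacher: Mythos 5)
Your proposal is correct and follows essentially the same route as the paper: verify orthonormality from the disjointness of the $\sigma_j$ and the normalization $\sum_{i\in\sigma_j}\abs{a_i}^2=1$, then use $Pf_j=f_j$, expand $P$ against the orthonormal basis $\{f_k\}$, and observe that only the $k=j$ term survives, giving $PEPf_j=\eta_j f_j$. The brief remark on norm-convergence and continuity of the bounded operators is the only (harmless) addition beyond what the paper writes.
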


\begin{proof} Since the supports of the $f_{j}$ are disjoint, it is clear that $\{f_{j}\}$ is an orthogonal system. The assumption that $\sum_{i\in\sigma_{j}}|a_{j}|^{2}=1$ implies $\norm{f_{j}}=1$ for each $j\in J$. Thus $\{f_{j}\}$ is an orthonormal basis for $\Hil_{0}$. Finally, for each $j\in J$ we have
\begin{align*}
PEPf_{j} & = PE\left(\sum_{i\in \sigma_{j}}a_{i}e_{i}\right) = P\left(\sum_{i\in \sigma_{j}}a_{i}Ee_{i}\right) = P\left(\sum_{i\in \sigma_{j}}a_{i}\lambda_{i}e_{i}\right) = \sum_{k\in J}\left\langle \sum_{i\in\sigma_{j}}a_{i}\lambda_{i}e_{i},f_{k}\right\rangle f_{k}\\
 & = \left\langle \sum_{i\in\sigma_{j}}a_{i}\lambda_{i}e_{i},f_{j}\right\rangle f_{j} = \left\langle \sum_{i\in\sigma_{j}}a_{i}\lambda_{i}e_{i},\sum_{i\in\sigma_{j}}a_{i}e_{i}\right\rangle f_{j} = \left(\sum_{i\in\sigma_{j}}\abs{a_{i}}^{2}\lambda_{i}\right)f_{j}.
\end{align*}
\end{proof}

\begin{thm}\label{diag} Let $E$ be a noncompact diagonalizable positive operator on a separable infinite dimensional Hilbert space $\Hil$. If $E$ is not a translate of an operator in $FK(\Hil)$, then there is a projection $P$ onto an infinite dimensional subspace such that $PEP=\alpha P$ for some $\alpha> 0$.
\end{thm}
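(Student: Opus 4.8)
The plan is to use Lemma \ref{block}: we want to partition (most of) the eigenbasis $\{e_i\}$ into finite blocks $\sigma_j$ and choose unimodular-sum coefficients $\{a_i\}_{i\in\sigma_j}$ so that each weighted average $\eta_j=\sum_{i\in\sigma_j}|a_i|^2\lambda_i$ equals a single fixed value $\alpha$. The resulting $\{f_j\}$ will be an orthonormal basis for an infinite dimensional $\Hil_0$ with $PEP=\alpha P$. So everything reduces to a combinatorial selection problem on the multiset of eigenvalues $\Lambda=\{\lambda_i\}$ (all $\geq 0$ since $E$ is positive), using each eigenvalue at most its multiplicity many times, and producing infinitely many disjoint finite subsets whose $\{a_i\}^2$-weighted averages all hit a common $\alpha$. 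A convenient normalization: if a block $\sigma_j$ has two elements with eigenvalues $\mu\le\alpha\le\nu$, we can pick $|a|^2=\frac{\nu-\alpha}{\nu-\mu}$ and $|b|^2=\frac{\alpha-\mu}{\nu-\mu}$ to get average exactly $\alpha$ (a singleton block works when $\lambda_i=\alpha$ exactly).

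First I would choose the target $\alpha$. Since $E$ is noncompact and positive, $0$ is not the only point of the essential spectrum bounded away from — more precisely, either $E$ has an eigenvalue of infinite multiplicity, or $E$ has infinitely many distinct eigenvalues not accumulating only at $0$. I would set $\alpha$ to be a value such that there are ``infinitely many eigenvalues (with multiplicity) on each side of $\alpha$, or at least enough to pair off.'' Concretely I would treat cases: (a) if some eigenvalue $\lambda$ has infinite multiplicity, take $\alpha=\lambda$ and use infinitely many disjoint singleton blocks — done immediately, and here the hypothesis about $FK$ is not even needed (such an $E$ is clearly not a translate of a compact operator); (b) otherwise all eigenvalues have finite multiplicity, and noncompactness forces infinitely many of them (counted with multiplicity) to lie outside any neighborhood of $0$, i.e. $\limsup \lambda_i>0$. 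Now consider the behavior of the eigenvalue multiset near its $\limsup$, call it $L\in(0,\infty]$ if finite, and near $\liminf$, call it $\ell\ge 0$.

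The heart of the argument is the pairing. I want to produce infinitely many pairs $(i,k)$ with $\lambda_i\le\alpha\le\lambda_k$ so I can apply the two-element block construction; if instead infinitely many eigenvalues equal $\alpha$ exactly, singletons suffice. The choice $\alpha$ and the pairing will go as follows. If there are infinitely many eigenvalues (with multiplicity) that are $\ge\alpha_0$ and infinitely many that are $\le\alpha_0$ for some $\alpha_0$ strictly between $\ell$ and $L$, pick $\alpha=\alpha_0$ and greedily pair a ``high'' eigenvalue with a ``low'' one; this exhausts each side down to a finite remainder and yields infinitely many blocks. The failure of this is exactly when one of the two sides is finite: say only finitely many $\lambda_i<\alpha_0$ for every $\alpha_0>\ell$, which means all but finitely many eigenvalues lie in $[\ell,\ell]$... more carefully, it means $\lambda_i\to\ell$. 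Symmetrically the other obstruction is $\lambda_i\to L<\infty$, i.e. again $\lambda_i$ converges. So the only obstruction to the pairing is: $\lambda_i$ converges to some $c$, and only finitely many $\lambda_i$ are $\le c$ (or only finitely many are $\ge c$). But $\lambda_i\to c$ with all but finitely many $\lambda_i> c$ means $E-cI$ is compact with $(E-cI)_-$ finite-dimensional kernel — this is precisely ``$E$ is a translate of an operator in $FK(\Hil)$,'' which is excluded by hypothesis. Thus, \emph{under the hypothesis}, the pairing always succeeds: we can choose $\alpha$ (any value strictly between $\liminf\lambda_i$ and the relevant cluster, or equal to an infinitely-repeated eigenvalue, or between two sides each hit infinitely often) so that infinitely many disjoint two-element (or one-element) blocks with average exactly $\alpha$ exist.

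I would organize the final write-up as a case split: (1) some $\lambda_i$ has infinite multiplicity; (2) all multiplicities finite, $\limsup\lambda_i=L$, $\liminf\lambda_i=\ell$, and the sequence does \emph{not} converge to a value $c$ whose ``one side'' is finite — here pick $\alpha\in(\ell,L)$ hit from both sides infinitely often and greedily pair; (3) the sequence converges to $c$ but with infinitely many eigenvalues on \emph{both} sides of $c$ (equivalently $(E-cI)_\pm$ both infinite-rank) — again pick $\alpha=c$ (if $c$ is attained infinitely often) or $\alpha$ slightly off and pair within $[c-\delta,c+\delta]$; (4) the excluded case. After selecting the blocks $\sigma_j$ and the coefficients making $\eta_j=\alpha$, Lemma \ref{block} finishes: with $\Hil_0=\overline{\lspan}\{f_j\}$ and $P$ the orthogonal projection onto it, $PEPf_j=\alpha f_j$ for all $j$, hence $PEP=\alpha P$ on $\Hil_0$; since $\{f_j\}$ is an orthonormal basis of $\Hil_0$ indexed by an infinite set, $\Hil_0$ is infinite dimensional.

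\textbf{Main obstacle.} The delicate point is not the linear algebra of a single block but getting the bookkeeping of the case analysis exactly right so that the excluded set — translates of $FK(\Hil)$ operators — is precisely the set of eigenvalue configurations for which the infinite greedy pairing breaks down. In particular I must be careful that when $E$ is noncompact and positive but \emph{is} a translate of a compact operator with, say, $\dim\ker(K_-)=\infty$ and $\dim\ker(K_+)<\infty$ (so $\lambda_i\to c$ from above), the construction genuinely can still run with $\alpha=c$ using infinitely many low eigenvalues paired against... wait, there are only finitely many eigenvalues $>c$, so one instead pairs among the $\lambda_i\downarrow c$ themselves: but every block then has average $>c$, never $=c$; and choosing $\alpha>c$ leaves only finitely many eigenvalues $\ge\alpha$. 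This confirms the obstruction is real and matches $FK$ exactly — so the write-up's burden is to verify this equivalence cleanly, presumably by proving the contrapositive direction's non-existence separately (as the paper says it does later) and here only handling the positive construction under the stated hypothesis.
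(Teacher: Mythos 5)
Your strategy is the same as the paper's: choose $\alpha>0$ so that infinitely many eigenvalues (with multiplicity) lie on each side of $\alpha$, pair one ``low'' with one ``high,'' pick convex weights making each two-term block average exactly $\alpha$, and invoke Lemma \ref{block}; the paper does precisely this, disposing of an infinite-dimensional eigenspace for some $\alpha>0$ first and otherwise locating $\alpha$ via the limit points of $\{\lambda_i\}$ (two limit points: take $\alpha$ strictly between them; one limit point $x$: then $E-x$ is compact, $x>0$ by noncompactness, and $E-x\notin FK(\Hil)$ forces both sides of $x$ to be infinite). But your case bookkeeping has a genuine hole. In your case (1) you take $\alpha=\lambda$ for \emph{any} eigenvalue of infinite multiplicity, whereas the theorem requires $\alpha>0$: if $0$ is the only eigenvalue of infinite multiplicity (say eigenvalues $0$ with infinite multiplicity together with $1+1/n$ and $2+1/n$, $n\in\N$ --- noncompact and not a translate of anything in $FK(\Hil)$), your construction outputs $\alpha=0$, which proves nothing (and in the frame application the projected vectors are all zero), while your cases (2)--(4) cannot absorb this operator because they assume all multiplicities are finite. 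The repair is exactly the paper's first reduction: restrict the eigenspace case to $\alpha>0$, and let the $0$-eigenvectors simply serve as the low halves of the pairs. (Your parenthetical that such an $E$ is ``clearly not a translate of a compact operator'' is also false --- take $E=I+K$ with $K$ compact, positive, and with infinite-dimensional kernel --- though nothing in the argument depends on it.)

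A second, smaller wobble is case (3): since $\lambda_i\to c$, any $\alpha\neq c$ has only finitely many eigenvalues on one side, so the fallback ``$\alpha$ slightly off and pair within $[c-\delta,c+\delta]$'' cannot succeed; the only admissible common value is $\alpha=c$ itself, which works whether or not $c$ is attained, so your qualifier ``if $c$ is attained infinitely often'' should be dropped (and if it held you would be in case (1) anyway). Also, in your ``heart of the argument'' paragraph, when all but finitely many $\lambda_i>c$ it is $(E-cI)_+$, not $(E-cI)_-$, that has finite-dimensional kernel --- harmless, since membership in $FK(\Hil)$ is symmetric in the two parts. With these corrections your argument becomes the paper's proof in slightly different clothing; as written, though, the case split does not cover all operators satisfying the hypothesis.
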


\begin{proof} First, assume there is some $\alpha>0$ such that $\dim\ker(E-\alpha)=\infty$. If $P$ is the projection onto $\ker(E-\alpha)$, then $PEP=\alpha P$. Thus we may assume that $\dim\ker(E-\alpha)<\infty$ for all $\alpha>0$.

Let $\{e_{i}\}_{i=1}^{\infty}$ be an orthonormal basis of eigenvectors of $E$ with associated eigenvalues $\{\lambda_{i}\}_{i=1}^{\infty}$. We wish to find a number $\alpha$ such that
\begin{equation}\label{diag1}|\{i\in\N: \lambda_{i}<\alpha\}| = |\{i\in\N:\lambda_{i}\geq \alpha\}| = \infty.\end{equation}
Consider the set of limit points of $\{\lambda_{i}\}$. By limit point we mean a real number $x$ such that, for all $\eps>0$, the set $\{i\in\N:\lambda_{i}\in(x-\eps,x+\eps)\}$ is infinite. If $\{\lambda_{i}\}$ has two limit points $x$ and $y$ such that $x<y$, then we let $\alpha\in(x,y)$.  The positivity of $E$ implies $x\geq 0$. This shows $\alpha>0$, and it is clear that \eqref{diag1} holds. If $\{\lambda_{i}\}$ has only one limit point $x$, then $E-x$ is compact. Since $E$ is not compact $x>0$. By assumption $E-x\notin FK(\Hil)$ and $\dim\ker(E-x)<\infty$. We deduce that \eqref{diag1} holds for $\alpha=x$. In either case we have \eqref{diag1} for some $\alpha>0$.

Let $\{\lambda_{n_{j}}\}_{j=1}^{\infty}$ and $\{\lambda_{m_{j}}\}_{j=1}^{\infty}$ be the subsequences of terms $<\alpha$ and $\geq\alpha$ respectively. For each $j\in \N$ define the set $\sigma_{j} = \{n_{j},m_{j}\}$. Since $\lambda_{n_{j}}<\alpha\leq \lambda_{m_{j}}$ for each $j\in\N$, there exists $a_{n_{j}},a_{m_{j}}\in[0,1]$ such that $a_{n_{j}}^{2}\lambda_{n} + a_{m_{j}}^{2}\lambda_{m} = \alpha$ and $a_{n_{j}}^{2}+a_{m_{j}}^{2} = 1$. For each $j\in\N$ set $f_{j} = a_{n_{j}}e_{n_{j}} + a_{m_{j}}e_{m_{j}}$. By Lemma \ref{block}, if $P$ is the projection onto $\Hil_{0}=\overline{\lspan}\{f_{j}\}_{j=1}^{\infty}$, then $\{f_{j}\}_{j=1}^{\infty}$ is an orthonormal basis for $\Hil_{0}$ and each $f_{j}$ is an eigenvector of $PEP$ with eigenvalue $a_{n_{j}}^{2}\lambda_{n} + a_{m_{j}}^{2}\lambda_{m} = \alpha$. In other words $PEP = \alpha P$, as desired.\end{proof}

To finish this section we will prove the ``only if'' direction of Theorem \ref{oper}. That is, if $E$ is a translate of an operator in $FK(\Hil)$ then there is no infinite rank projection $P$ such that $PEP=\alpha P$. First, we need a lemma.

\begin{lem}\label{frk} Let $P$ and $K$ be operators on an infinite dimensional Hilbert space. Assume $P$ is a projection and assume $K$ is a positive operator with $\dim\ker(K)<\infty$. If $PKP$ is finite rank, then $P$ is finite rank.
\end{lem}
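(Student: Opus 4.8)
The plan is to pass from $K$ to its positive square root and reduce the statement to an elementary rank--nullity count. Write $K = (K^{1/2})^{2}$, so that $PKP = (K^{1/2}P)^{\ast}(K^{1/2}P)$. For any bounded operator $T$ one has $\ker(T^{\ast}T) = \ker T$, and since $T^{\ast}T$ is self-adjoint its range closure is $(\ker(T^{\ast}T))^{\perp} = (\ker T)^{\perp}$; hence $T^{\ast}T$ is finite rank if and only if $\ker T$ has finite codimension, which forces $T$ itself to be finite rank. Applying this with $T = K^{1/2}P$, the hypothesis that $PKP$ is finite rank yields that $K^{1/2}P$ is finite rank, i.e. the subspace $K^{1/2}(P\Hil)$ is finite dimensional.

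Now consider the restriction $S := K^{1/2}\big|_{P\Hil}\colon P\Hil \to \Hil$. Its range is $K^{1/2}(P\Hil)$, which we have just shown is finite dimensional, and its kernel is $P\Hil \cap \ker(K^{1/2}) = P\Hil \cap \ker K$, which has dimension at most $\dim\ker K < \infty$. Since $P\Hil/\ker S$ is linearly isomorphic to $\ran S$, we get $\dim(P\Hil) = \dim\ker S + \dim\ran S < \infty$, so $P$ is finite rank. (Equivalently, one can skip the square root and compute $\ker(PKP)$ directly: positivity of $K$ gives $PKPx = 0 \iff \langle KPx,Px\rangle = 0 \iff Px \in \ker K$, so that $\ker(PKP) = (P\Hil\cap\ker K)\oplus(P\Hil)^{\perp}$; if this has finite codimension in $\Hil$ then $\dim\bigl(P\Hil/(P\Hil\cap\ker K)\bigr) < \infty$, and together with $\dim(P\Hil\cap\ker K)\le\dim\ker K<\infty$ this again gives $\dim P\Hil<\infty$.)

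There is no genuinely hard step here; the lemma is a bookkeeping argument. The only point that needs care is the reduction involving $K^{1/2}$ — equivalently, the computation of $\ker(PKP)$ — where positivity of $K$ is essential: it is exactly positivity that prevents $PKP$ from dropping rank through accidental cancellation and ties the kernel of $PKP$ to the set where $P$ maps into $\ker K$. Once that is in hand, the conclusion is just the elementary fact that a linear map with finite-dimensional kernel and finite-dimensional range has finite-dimensional domain.
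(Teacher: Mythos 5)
Your proof is correct, and both of your variants hold up: the identity $\ker(T^{\ast}T)=\ker T$ together with $\overline{\ran(T^{\ast}T)}=(\ker T^{\ast}T)^{\perp}$ is applied correctly to $T=K^{1/2}P$, the kernel computation $\ker(PKP)=(P\Hil\cap\ker K)\oplus(P\Hil)^{\perp}$ in your parenthetical alternative is right, and the final rank--nullity count (finite-dimensional kernel plus finite-dimensional range forces a finite-dimensional domain) is sound. The underlying mechanism is the same as in the paper --- positivity of $K$ is what identifies $\ran P\cap\ker(PKP)$ with $\ran P\cap\ker K$, which is then small because $\ker K$ is --- but your execution of the second half differs. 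The paper splits $\ran P=V\oplus W$ with $V=\ran P\cap\ker(PKP)$ and shows $\dim W<\infty$ by a contradiction argument: if $W$ were infinite dimensional, the vectors $PKPv_i$ ($v_i$ an orthonormal basis of $W$) would be linearly dependent inside the finite-dimensional range of $PKP$, producing a nonzero vector of $W$ killed by $PKP$. You instead invoke the structural facts that $PKP$ (or $T^{\ast}T$) is self-adjoint, so its kernel has finite codimension whenever it has finite rank, and then finish with the purely algebraic isomorphism $P\Hil/\ker S\cong\ran S$. Your route is shorter and avoids the basis bookkeeping and the dependence argument, at the cost of quoting the square root $K^{1/2}$ and the range--kernel duality for self-adjoint operators; the paper's argument is more elementary and self-contained, using only the definition of positivity and a pigeonhole-style dependence count. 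Either version is a complete proof of the lemma.
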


\begin{proof} Define the subspace $V = \ran P\cap \ker(PKP)$. Let $\{v_{i}\}_{i\in I\cup J}$ be an orthonormal basis for $\ran P$ such that $\{v_{i}\}_{i\in I}$ is an orthonormal basis for $V$. Set $W={\overline{\lspan}}\{v_{i}\}_{i\in J}$. For $v\in V$ we have $0=PKPv=PKv$. This implies $Kv\in\ker P = (\ran P)^{\perp}$ and thus $\langle Kv,v\rangle = 0$. Since $K$ is a positive operator we conclude that $v\in\ker K$ and thus $V\subset \ker K$. Since the kernel of $K$ is finite dimensional, so is $V$.

Assume toward a contradiction that $W$ is infinite dimensional, which is equivalent to $J$ being infinite. For each $w\in W\setminus\{0\}$ we have $PKPw\neq 0$. Since $\ran PKP$ is finite dimensional, the set $\{PKPv_{i}\}_{i\in J}$ is dependent. There is a finite subset $F\subset J$ and nonzero scalars $\{\beta_{i}\}_{i\in F}$ such that $\sum_{i\in F}\beta_{i}PKPv_{i} = 0$. However, the vector $\sum_{i\in F}\beta_{i}v_{i}$ is a nonzero vector in $W$. Thus $PKP\sum_{i\in F}\beta_{i}v_{i}\neq 0$. This contradiction shows $\dim W<\infty$. Since $\ran P = V\oplus W$ we have $\dim\ran P<\infty$.
\end{proof}

\begin{thm}\label{cpt} Let $\Hil$ be an infinite dimensional Hilbert space and let $K\in FK(\Hil)$. If $P$ is a projection onto an infinite dimensional subspace and $\alpha,\beta\in\R$, then $P(\beta+K)P\neq \alpha P$.
\end{thm}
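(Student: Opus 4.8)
The plan is to argue by contradiction. Suppose $P$ is a projection onto an infinite dimensional subspace and $P(\beta+K)P=\alpha P$. Since $P(\beta+K)P=\beta P+PKP$, this rearranges to $PKP=\gamma P$ where $\gamma=\alpha-\beta$.

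The first step is to show $\gamma=0$. The operator $K$ is compact and $P$ is bounded, so $PKP$ is compact. If $\gamma\neq 0$, then $P=\gamma^{-1}PKP$ would be compact; but a projection is compact only when its range is finite dimensional, so this contradicts the hypothesis on $P$. Hence $\gamma=0$, i.e. $PKP=0$.

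The second step deduces that $P$ is finite rank, via Lemma \ref{frk}. After possibly replacing $K$ by $-K$ (which lies in $FK(\Hil)$ and still satisfies $PKP=0$), we may assume $\dim\ker(K_{+})<\infty$. As noted after the definition of $FK(\Hil)$, this means $K$ has only finitely many nonpositive eigenvalues with multiplicity, so the negative part $K_{-}$ is finite rank. From $PKP=0$ we get $PK_{+}P=PK_{-}P$, which is finite rank; hence $P(K_{+}+K_{-})P=PK_{+}P+PK_{-}P$ is finite rank. The operator $K_{+}+K_{-}$ is positive, and $\ker(K_{+}+K_{-})\subseteq\ker(K_{+})$ is finite dimensional, so Lemma \ref{frk} (applied to $K_{+}+K_{-}$ in place of $K$) forces $P$ to be finite rank, contradicting that its range is infinite dimensional.

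I expect the first step---noticing that compactness of $K$ immediately kills the scalar $\gamma$---to be the conceptual crux; after that, the argument is routine finite-rank bookkeeping packaged by Lemma \ref{frk}. As an alternative to the second step one can work with quadratic forms: $PKP=0$ says $\langle Kf,f\rangle=0$ for every $f$ in the range of $P$, which (since $K$ is self-adjoint with $K=K_{+}-K_{-}$ and $K_{\pm}$ positive) forces $\ran P\cap\ker(K_{-})\subseteq\ker(K_{+})$; as $\ker(K_{-})$ has finite codimension and $\ker(K_{+})$ is finite dimensional, $\ran P$ is again finite dimensional.
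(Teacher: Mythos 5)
Your proof is correct and follows essentially the same route as the paper: reduce to $PKP=\gamma P$, dispose of $\gamma\neq 0$ by compactness of $PKP$, and in the case $\gamma=0$ use the finite rank of $K_{-}$ together with Lemma \ref{frk} to force $P$ to be finite rank (the paper applies the lemma to $K_{+}$ directly rather than to $K_{+}+K_{-}$, an immaterial difference). Your quadratic-form aside is also a valid shortcut that would bypass Lemma \ref{frk} in the $\gamma=0$ case, but the main argument is the paper's.
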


\begin{proof} If there is some $\alpha,\beta\in\R$ and projection $P$ with $P(\beta+K)P = \alpha P$, then $PKP=(\alpha-\beta)P$. Thus, it is enough to show that $PKP\neq\alpha P$ for all projections $P$ and all $\alpha\in\R$.

Let $\{e_{i}\}_{i=1}^{\infty}$ be an orthonormal basis of eigenvectors of $K$ with associated eigenvalues $\{\lambda_{i}\}_{i=1}^{\infty}$. The positive and negative parts of $K$ are given by
\[K_{+}f = \sum_{\{i:\lambda_{i}> 0\}}\lambda_{i}\langle f,e_{i}\rangle e_{i}\quad \text{and}\quad K_{-}f = -\sum_{\{i:\lambda_{i}< 0\}}\lambda_{i}\langle f,e_{i}\rangle e_{i},\quad f\in\Hil.\]
By assumption, one of these operators has a finite dimensional kernel. We may assume without loss of generality that $\dim\ker(K_{+})<\infty$. Note that $K_{-}$ must be finite rank.

First, we consider the case that $\alpha=0$. Assume toward a contradiction that there is an infinite rank projection $P$ such that
\begin{equation}\label{cpt1}0=PKP = PK_{+}P - PK_{-}P.\end{equation}
Since $K_{-}$ is in the ideal of finite rank operators, $PK_{-}P$ is also finite rank. From \eqref{cpt1} we see that $PK_{+}P$ must also be a positive finite rank operator. Lemma \ref{frk} implies that $P$ is finite rank and gives the desired contradiction.

Next, assume there is some $\alpha>0$ and projection $P$ so that $PKP=\alpha P$. Since the compact operators form an ideal, we see that $PKP$ is compact. This implies that $P$ is compact and thus a finite rank projection. \end{proof}

\begin{ex}\label{noproj} The following is an example of a frame $\{\phi_{n}\}_{n=1}^{\infty}$ (in fact, a bounded orthogonal basis) for an infinite dimensional Hilbert space such that no projection of the frame onto an infinite dimensional subspace is a tight frame for the subspace.

Let $\Hil$ be an infinite dimensional Hilbert space with orthonormal basis $\{e_{n}\}_{n=1}^{\infty}$. For each $n\in\N$ set $\phi_{n} = (2-n^{-1})^{1/2}e_{n}$. The frame operator $S$ is given by
\[Sf = \sum_{n=1}^{\infty}\left\langle \sum_{m=1}^{\infty}\langle f,e_{m}\rangle e_{m},(2-n^{-1})^{1/2}e_{n}\right\rangle (2-n^{-1})^{1/2}e_{n} = \sum_{n=1}^{\infty}(2-n^{-1})\langle f,e_{n}\rangle e_{n}.\]
Thus $\{e_{n}\}_{n=1}^{\infty}$ is an orthonormal basis of eigenvectors of $S$ with associated eigenvalues $\{2-n^{-1}\}_{n=1}^{\infty}$. Define $K:\Hil\to\Hil$ by
\[Kf = \sum_{n=1}^{\infty}n^{-1}\langle f,e_{i}\rangle e_{i},\]
and note that $K\in FK(\Hil)$ and $S = 2-K$. Now, assume there is a projection $P$ onto an infinite dimensional subspace $\Hil_{0}\subset \Hil$ such that $\{P\phi_{n}\}_{n=1}^{\infty}$ is a tight frame for $\Hil_{0}$ with frame bound $\alpha>0$. That is, for $f\in \Hil$ we have
\[\alpha Pf = \sum_{n=1}^{\infty}\langle Pf,P\phi_{n}\rangle P\phi_{n} = P\left(\sum_{n=1}^{\infty}\langle Pf,\phi_{n}\rangle \phi_{n}\right) = PSPf.\]
From this, we see
\[PKP = P(2I-S)P = 2PIP-PSP = (2-\alpha)P.\]
By Theorem \ref{cpt} this is impossible.
\end{ex}

\section{Nondiagonalizable operators}

In this section we wish to extend Theorem \ref{diag} to nondiagonalizable operators.

\begin{defn} Let $\mu$ be a positive measure on $X$. Given $\phi\in L^{\infty}(X,\mu)$, the operator $M_{\phi}:L^{2}(X,\mu)\to L^{2}(X,\mu)$ given by
\[(M_{\phi}f)(x) = \phi(x)f(x),\quad x\in X,\quad f\in L^{2}(X,\mu)\]
is called the {\it multiplication operator} of $\phi$.
\end{defn}

We will use the following version of the Spectral Theorem \cite{conw}.

\begin{thm}\label{specthm} Let $N$ be a normal operator on a separable Hilbert space $\Hil$. There exists a $\sigma$-finite measure space $(X,\mu)$ and a function $\phi\in L^{\infty}(X,\mu)$ such that $N$ is unitarily equivalent to $M_{\phi}$.
\end{thm}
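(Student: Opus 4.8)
This is the multiplication--operator form of the spectral theorem, and I would prove it by the classical route through cyclic subspaces. Recall first the continuous functional calculus: since $N$ is bounded and normal, $\sigma(N)$ is a compact subset of $\C$ and there is an isometric $*$-isomorphism $C(\sigma(N))\to C^{*}(N)\subseteq\B(\Hil)$, written $g\mapsto g(N)$, carrying the coordinate function $\iota(z)=z$ to $N$. A subspace of the form $\overline{\lspan}\{g(N)\xi:g\in C(\sigma(N))\}$ is invariant under both $N$ and $N^{*}$, i.e.\ it reduces $C^{*}(N)$, and hence so is its orthogonal complement. Starting from an arbitrary unit vector, forming the closed $C^{*}(N)$-cyclic subspace it generates, passing to the orthogonal complement and iterating (a routine Zorn's lemma / induction argument), I obtain an orthogonal decomposition $\Hil=\bigoplus_{n}\Hil_{n}$ into reducing subspaces, each carrying a $C^{*}(N)$-cyclic vector $\xi_{n}$. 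Separability of $\Hil$ forces the index set to be at most countable, and this is the only point at which separability enters.

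On a single cyclic summand, the functional $g\mapsto\langle g(N)\xi_{n},\xi_{n}\rangle$ is positive on $C(\sigma(N))$, so the Riesz--Markov theorem produces a finite positive regular Borel measure $\mu_{n}$ on $\sigma(N)$ with $\langle g(N)\xi_{n},\xi_{n}\rangle=\int g\,d\mu_{n}$; in particular $\mu_{n}(\sigma(N))=\norm{\xi_{n}}^{2}<\infty$. Since $\norm{g(N)\xi_{n}}^{2}=\langle (\abs{g}^{2})(N)\xi_{n},\xi_{n}\rangle=\int\abs{g}^{2}\,d\mu_{n}$, the map $g(N)\xi_{n}\mapsto g$ is an isometry from a dense subspace of $\Hil_{n}$ onto a dense subspace of $L^{2}(\sigma(N),\mu_{n})$, hence extends to a unitary $U_{n}\colon\Hil_{n}\to L^{2}(\sigma(N),\mu_{n})$; tracking the coordinate function shows $U_{n}(N|_{\Hil_{n}})U_{n}^{*}=M_{\iota}$, multiplication by $z$.

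Finally I would assemble the pieces. Let $X$ be the disjoint union of copies $X_{n}=\sigma(N)\times\{n\}$, equipped with the measure $\mu$ restricting to $\mu_{n}$ on $X_{n}$, so that $L^{2}(X,\mu)\cong\bigoplus_{n}L^{2}(\sigma(N),\mu_{n})$; as a countable disjoint union of finite measures, $\mu$ is $\sigma$-finite. Let $\phi$ be the function on $X$ equal to the coordinate projection $z$ on each $X_{n}$; compactness of $\sigma(N)$ gives $\norm{\phi}_{\infty}\le\norm{N}$, so $\phi\in L^{\infty}(X,\mu)$. Then $U=\bigoplus_{n}U_{n}$ is unitary from $\Hil$ onto $L^{2}(X,\mu)$, and $UNU^{*}=M_{\phi}$ by construction. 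The only substantive point is the cyclic decomposition: normality of $N$ is what supplies the functional calculus on $C(\sigma(N))$ with $g(N)^{*}=\overline{g}(N)$, so that the $C^{*}(N)$-cyclic subspaces and their complements are invariant under both $N$ and $N^{*}$, and separability is what keeps the decomposition countable. Everything after that is Riesz--Markov plus bookkeeping, and the $\sigma$-finiteness and $L^{\infty}$ clauses are precisely what the disjoint-union construction and the boundedness of $N$ deliver.
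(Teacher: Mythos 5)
The paper does not prove this statement at all: it is quoted as a standard form of the Spectral Theorem with a citation to Conway's \emph{A course in functional analysis}. Your cyclic-subspace decomposition followed by Riesz--Markov and the disjoint-union assembly is correct and is essentially the classical argument given in that reference, so there is nothing to add.
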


The following lemma is another generalization of \cite[Theorem 2.3]{cl}, this time adapted for multiplication operators.

\begin{lem}\label{blockn} Let $\mu$ be a positive measure on $X$ and let $\phi\in L^{\infty}(X,\mu)$. Let $\{X_{i}\}_{i\in I}$ be disjoint measurable subsets of $X$, each with positive measure. For each $i\in I$ let $f_{i}$ be a measurable function supported on $X_{i}$ such that $\norm{f_{i}}_{L^{2}(X,\mu)}=1$. Let $P$ be the projection onto $\Hil_{0} = \overline{\lspan}\{f_{i}\}_{i\in I}\subset L^{2}(X,\mu)$. Then, $\{f_{i}\}_{i\in I}$ is an orthonormal basis for $\Hil_{0}$, and for each $i\in I$ we have $PM_{\phi}Pf_{i} = \eta_{i}f_{i}$, where 
\[\eta_{i} = \int_{X_{i}}\phi\abs{f_{i}}^2\,d\mu.\]
\end{lem}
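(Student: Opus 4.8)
The plan is to verify directly that the hypotheses of the lemma force $\{f_i\}_{i\in I}$ to be an orthonormal system and then to compute $PM_\phi P f_i$ from the definition of a multiplication operator. The orthonormality is immediate: if $i\neq k$ then $f_i$ is supported on $X_i$ and $f_k$ on $X_k$, and since $X_i\cap X_k$ has measure zero, the product $f_i\overline{f_k}$ vanishes $\mu$-a.e., so $\langle f_i,f_k\rangle_{L^2}=0$; combined with the normalization $\|f_i\|_2=1$ this shows $\{f_i\}$ is orthonormal, and by definition of $\Hil_0$ as the closed span it is an orthonormal basis for $\Hil_0$.

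Next I would compute $PM_\phi P f_i$. Since $f_i\in\Hil_0$ we have $Pf_i=f_i$, so the task is to evaluate $P(M_\phi f_i)=P(\phi f_i)$. Note $\phi f_i\in L^2(X,\mu)$ because $\phi\in L^\infty$, and $\phi f_i$ is supported on $X_i$ (as $f_i$ is). Expanding $P(\phi f_i)$ in the orthonormal basis $\{f_k\}_{k\in I}$ of $\Hil_0$ gives $P(\phi f_i)=\sum_{k\in I}\langle \phi f_i, f_k\rangle f_k$. For $k\neq i$, the integrand $\phi f_i\overline{f_k}$ is supported on $X_i\cap X_k$, which is $\mu$-null, so $\langle \phi f_i,f_k\rangle=0$; hence only the $k=i$ term survives, yielding $P(\phi f_i)=\langle \phi f_i, f_i\rangle f_i$. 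Finally $\langle \phi f_i, f_i\rangle=\int_X \phi |f_i|^2\,d\mu=\int_{X_i}\phi|f_i|^2\,d\mu$ since $f_i$ is supported on $X_i$; setting $\eta_i$ equal to this integral completes the identity $PM_\phi P f_i=\eta_i f_i$.

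There is no real obstacle here — the argument is entirely routine measure theory plus the definition of the orthogonal projection onto a closed subspace with a known orthonormal basis. The only point to state with a little care is the interchange allowing us to drop all the off-diagonal inner products, which rests solely on the pairwise-disjointness (up to null sets) of the $X_i$; everything else is bookkeeping. I would write it as a short direct computation mirroring the proof of Lemma \ref{block}.
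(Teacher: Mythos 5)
Your argument is correct and coincides with the paper's own proof: orthonormality from the disjointness of the supports, then expanding $P(\phi f_i)$ in the basis $\{f_k\}$ so that only the diagonal term $\langle \phi f_i, f_i\rangle = \int_{X_i}\phi\abs{f_i}^2\,d\mu$ survives. Nothing further is needed.
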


\begin{proof} Since the supports of the $f_{j}$ are disjoint, we see that $\{f_{j}\}_{j\in\N}$ is an orthonormal basis for $\Hil_{0}$. For $i\in I$ we have
\begin{align*}
PM_{\phi}Pf_{i} & = PM_{\phi}f_{i} = P\left(\phi\cdot f_{i}\right) = \sum_{j\in I}\langle \phi\cdot f_{i},f_{j}\rangle f_{j} = \langle \phi\cdot f_{i},f_{i}\rangle f_{i} = \left(\int_{X}\phi\cdot \abs{f_{i}}^2\,d\mu\right)f_{i}.
\end{align*}
Since $\phi\cdot f_{i}$ is supported on $X_{i}$, this gives the desired result.
\end{proof}

The following example demonstrates the use of Lemma \ref{blockn}.

\begin{ex} Let $E:L^{2}[0,1)\to L^{2}[0,1)$ be given by $(Ef)(x) = x\cdot f(x)$. Define $\{X_{i}\}_{i\in\N}$ by
\[X_{i} = [2^{-i-1},2^{-i})\cup[1-2^{-i},1-2^{-i-1}),\]
and set $f_{i}=2^{i/2}\chi_{X_{i}}$ for each $i\in\N$. We have
\[\norm{f_{i}}^{2} = \int_{0}^{1}\abs{f_{i}}^{2}\,dx = \int_{X_{i}}2^{i}\,dx = 2\cdot 2^{i}(2^{-i}-2^{-i-1}) = 1.\]
By Lemma \ref{blockn}, if $P$ is the projection onto $\Hil_{0} = \overline{\lspan}\{f_{i}\}_{i=1}^{\infty}$, then each $f_{i}$ is an eigenvector of $PEP$ with eigenvalue
\[\eta_{i} = \int_{X_{i}}x \abs{f_{i}(x)}^{2}\,dx = 2^{i}\int_{2^{-i-1}}^{2^{-i}}x\,dx + 2^{i}\int_{1-2^{-i}}^{1-2^{-i-1}}x\,dx = \frac{1}{2}.\]
Thus $PEP = (1/2)P$.
\end{ex}

\begin{lem}\label{part} Let $(X,\mu)$ be a $\sigma$-finite measure space, and let $\phi\in L^{\infty}(X,\mu)$ be a function which is not constant on any set of positive measure. If $y\in\C$ is in the essential range of $\phi$ then for any open set $B$ containing $y$ there is a countable infinite partition of $\phi^{-1}(B)$ into sets with finite positive measure. 
\end{lem}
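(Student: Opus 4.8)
The plan is to build the partition by hand, peeling off one piece at a time. Fix $y$ and an open set $B \ni y$, and let $A = \phi^{-1}(B)$. Since $y$ is in the essential range, $\mu(A) > 0$. We need to show $A$ can be split into countably infinitely many measurable pieces, each of finite positive measure.

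First I would reduce to the case $\mu(A) < \infty$. By $\sigma$-finiteness, write $X = \bigcup_k Y_k$ with $\mu(Y_k) < \infty$; then $A = \bigcup_k (A \cap Y_k)$, and at least one $A \cap Y_k$ has positive (and finite) measure. Splitting that piece into infinitely many sets of finite positive measure, and lumping every other $A \cap Y_j$ into one of those pieces (or just throwing the countably many remaining $A \cap Y_j$'s in as additional pieces of the partition, discarding any null ones and merging any that are null), reduces everything to the single set of finite positive measure. So assume $0 < \mu(A) < \infty$.

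The key step is: if $C$ is any measurable set with $0 < \mu(C) < \infty$, then $C$ can be split into two measurable pieces each of positive measure. This is where the hypothesis that $\phi$ is not constant on any set of positive measure enters — actually even that is overkill here; nonatomicity of the relevant part of the measure would suffice, but since $\phi$ is nonconstant on $C$ (as $\mu(C)>0$), there is a real number $t$ such that both $\{x \in C : \operatorname{Re}\phi(x) < t\}$ and $\{x \in C : \operatorname{Re}\phi(x) \geq t\}$ have positive measure (if no such $t$ existed, $\operatorname{Re}\phi$ would be a.e. constant on $C$; similarly handle the imaginary part), giving the desired splitting of $C$ into $C_0 \sqcup C_1$ with $\mu(C_0), \mu(C_1) > 0$. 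I would then iterate: set $A_1 = A$, and having chosen $A_n$, split it as $A_n = B_n \sqcup A_{n+1}$ with both of positive measure, and let $B_n$ be the $n$-th piece of the partition, with a final leftover handled by noting $\bigcap_n A_n$ may be nonnull — but we can arrange at each stage that $\mu(A_{n+1}) \leq \tfrac{1}{2}\mu(A_n)$ by choosing the threshold $t$ appropriately (replace $t$ by the median-type value making $\mu(\{x\in A_n:\operatorname{Re}\phi(x)<t\})$ land in a favorable range, or if $\operatorname{Re}\phi$ is constant on $A_n$ use the imaginary part, or if both are constant invoke the hypothesis directly to get a nontrivial split and then just accept $\bigcap_n A_n$ as null since $\sum \mu(B_n) \le \mu(A) < \infty$ forces $\mu(A_n) \to 0$ only if the $B_n$ don't vanish — here one instead argues $\mu(A_n)$ is decreasing and $\sum_{n}\mu(B_n)\le\mu(A)$, so $\mu(B_n)\to 0$, and one simply merges the residual set $R=\bigcap_n A_n$ into $B_1$; if $R$ is null we discard it). Either way, $\{B_n\}_{n}$ together with the reabsorbed residual gives a countably infinite partition of $A$ into sets of finite positive measure.

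The main obstacle is purely bookkeeping: ensuring the iteration genuinely produces \emph{infinitely many} pieces of \emph{positive} measure rather than stalling or leaving a fat residual set that itself needs splitting. The clean way to handle this, which I would adopt, is to not worry about the residual at all: carry out the binary splitting so that $B_n$ always has positive measure (guaranteed by the splitting lemma), observe $\sum_n \mu(B_n) \le \mu(A) < \infty$, let $R = A \setminus \bigcup_n B_n$, and then absorb $R$ into $B_1$ by replacing $B_1$ with $B_1 \cup R$ (still positive, still finite measure). This yields the partition $\{B_1 \cup R, B_2, B_3, \dots\}$ of $A$ into countably infinitely many sets of finite positive measure, completing the proof.
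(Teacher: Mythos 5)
Your argument is correct in the final, clean form you adopt, but it takes a genuinely different route from the paper. You read the hypothesis that $\phi$ is not constant on any positive-measure set as a nonatomicity-type statement: every measurable $C$ with $\mu(C)>0$ splits into two positive-measure pieces by thresholding $\operatorname{Re}\phi$ (or $\operatorname{Im}\phi$), and iterating this splitting and absorbing the residual $\bigcap_n A_n$ into the first piece yields the countably infinite partition, with $\sigma$-finiteness entering only to reduce to $\mu(\phi^{-1}(B))<\infty$ (two small points there: take the $Y_k$ pairwise disjoint, and note that lumping \emph{all} the remaining $A\cap Y_j$ into a single piece could produce an infinite-measure piece, so your alternative of keeping them as separate pieces and merging the null ones is the right choice). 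The paper instead exploits the point $y$ itself: it takes a nested sequence of open sets $B_n$ shrinking to $\{y\}$, observes that $\mu(\phi^{-1}(B_n))>0$ because $y$ is in the essential range while $\mu(\phi^{-1}(\{y\}))=0$ by the nonconstancy hypothesis, passes to a subsequence along which these measures strictly decrease, and uses the annular preimages $E_n=\phi^{-1}(B_n\setminus B_{n+1})$ as the partition, again invoking $\sigma$-finiteness to refine infinite-measure pieces into finite ones. The paper's construction is shorter and completely explicit; yours is more elementary in that it never uses the topology around $y$ --- it only needs $\mu(\phi^{-1}(B))>0$ --- and in fact it proves the stronger statement that, under the standing hypothesis on $\phi$, \emph{every} set of positive measure admits a countably infinite partition into sets of finite positive measure.
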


\begin{proof} Under the $\sigma$-finiteness assumption it is enough to find a partition into sets of positive measure. Indeed, any set of infinite measure can be partitioned into countably many sets of finite positive measure.

Let $\{B_{n}\}_{n=1}^{\infty}$ be a nested sequence of open sets with $B_{1}=B$ and $\bigcap B_{n} = \{y\}$. Consider the sequence $a_{n}=\mu(\phi^{-1}(B_{n}))$. Since $y$ is in the essential range of $\phi$ we see that $a_{n}>0$ for all $n$. Moreover, we have
\[0 = \mu(\phi^{-1}(\{y\})) = \mu\left(\bigcap_{n=1}^{\infty} \phi^{-1}(B_{n})\right) = \lim_{N\to\infty}\mu\left(\bigcap_{n=1}^{N}\phi^{-1}(B_{n})\right) = \lim_{N\to\infty}a_{N}.\]
Thus, after passing to a subsequence (keeping the first term), we may assume $\{a_{n}\}_{k=1}^{\infty}$ is strictly decreasing. For each $n\in\N$ set
\[E_{n} = \phi^{-1}(B_{n}\setminus B_{n+1}).\]
Note that
\[\mu(E_{n}) = \mu(\phi^{-1}(B_{n}\setminus B_{n+1})) = \mu(\phi^{-1}(B_{n})) - \mu(\phi^{-1}(B_{n+1}))> \mu(\phi^{-1}(B_{n})) - \mu(\phi^{-1}(B_{n}))= 0,\]
and it is clear that $\{E_{n}\}$ is a partition of $\phi^{-1}(B)$.
\end{proof}

The next theorem is a version of Theorem \ref{oper} for operators with no eigenvalues.

\begin{thm}\label{cont} Let $(X,\mu)$ be a $\sigma$-finite measure space, and let $\phi\in L^{\infty}(X,\mu)$ be a nonnegative function which is not constant on any set of positive measure. There exists a projection $P$ onto an infinite dimensional subspace $\Hil_{0}\subset L^{2}(X,\mu)$ and $\alpha>0$ so that $PM_{\phi}P = \alpha P$.
\end{thm}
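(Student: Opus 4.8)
The plan is to use Lemma \ref{blockn}: if we can partition (part of) $X$ into countably many disjoint sets $X_i$ of finite positive measure and choose unit-norm functions $f_i$ supported on $X_i$ with $\int_{X_i}\phi|f_i|^2\,d\mu=\alpha$ for a common value $\alpha$, then the projection $P$ onto $\overline{\lspan}\{f_i\}$ satisfies $PM_\phi P=\alpha P$, and $P$ has infinite rank. So the whole problem reduces to producing such a family for a suitable $\alpha$.

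The key idea for producing the $f_i$ is the same ``balancing'' trick used in Theorem \ref{diag}: on a set where $\phi$ takes values both below and above a target $\alpha$, we can build a unit vector whose weighted average of $\phi$ is exactly $\alpha$. Concretely, I would first pick $\alpha>0$ in the interior of the essential range of $\phi$ (the essential range has positive length, or at least two distinct points, because $\phi$ is nonconstant on every positive-measure set and is nonnegative; if the essential range were a single point $\{c\}$ then $\phi=c$ a.e., contradicting nonconstancy — and $c>0$ since if the essential range is $\{0\}$ similarly $\phi = 0$ a.e.). Choose $0<\beta<\alpha<\gamma$ with $\beta,\gamma$ in the essential range. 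Then $A:=\phi^{-1}([0,\beta])$ — more precisely $\phi^{-1}$ of a small neighborhood of $\beta$ contained in $[0,\alpha)$ — and $B:=\phi^{-1}$ of a small neighborhood of $\gamma$ contained in $(\alpha,\infty)$ both have positive measure. By Lemma \ref{part} (applied with $y=\beta$ and $y=\gamma$, using open sets inside $[0,\alpha)$ and $(\alpha,\infty)$ respectively) we get countably infinite partitions $\{A_j\}$ of (a subset of) $A$ into sets of finite positive measure with $\phi\le \alpha-\delta$ on each, and similarly $\{B_j\}$ of $B$ with $\phi\ge \alpha+\delta$ on each — actually we just need, on $A_j$, $\mathrm{ess\,inf}_{A_j}\phi < \alpha$ and on $B_j$, $\mathrm{ess\,sup}_{B_j}\phi>\alpha$, which is automatic. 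For each $j$ set $X_j = A_j\cup B_j$, and look for $f_j = s\,\mu(A_j)^{-1/2}\chi_{A_j} + t\,\mu(B_j)^{-1/2}\chi_{B_j}$ with $s^2+t^2=1$; then $\int_{X_j}\phi|f_j|^2 = s^2 \bar\phi_{A_j} + t^2 \bar\phi_{B_j}$ where $\bar\phi_{A_j}$ is the average of $\phi$ over $A_j$ and $\bar\phi_{B_j}$ over $B_j$. Since $\bar\phi_{A_j} < \alpha$ and $\bar\phi_{B_j}>\alpha$ (averages of a function lying a.e. strictly below/above $\alpha$ on a positive-measure set), by the intermediate value theorem there is a choice of $(s,t)$ with $s^2+t^2=1$ making this average equal to $\alpha$. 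Each $f_j$ has unit $L^2$ norm and is supported on $X_j$, the $X_j$ are pairwise disjoint with finite positive measure, so Lemma \ref{blockn} applies and gives $PM_\phi P = \alpha P$ with $P$ infinite rank.

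I anticipate the main obstacle is the careful bookkeeping in the first step: choosing $\alpha$ and the neighborhoods of $\beta$ and $\gamma$ so that $\phi$ is genuinely bounded away from $\alpha$ (below on the $A$-side, above on the $B$-side), and arguing that the resulting averages $\bar\phi_{A_j},\bar\phi_{B_j}$ lie on opposite sides of $\alpha$. This is where nonconstancy of $\phi$ and the definition of essential range must be combined: one must verify that ``$\beta$ in the essential range, $\beta<\alpha$'' together with a suitable open set $B_\beta\subseteq[0,\alpha)$ around $\beta$ yields $\mu(\phi^{-1}(B_\beta))>0$, so that Lemma \ref{part} produces the partition, and that on each piece the average of $\phi$ is $<\alpha$ because $\phi<\alpha$ a.e.\ there. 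The rest — unit-normalizing, solving the two-variable quadratic for $(s,t)$, and invoking Lemma \ref{blockn} — is routine. One should also note that $\mu$ restricted to these sets is $\sigma$-finite so the countable partitions of positive-measure sets into finite-positive-measure sets are available, which is exactly what Lemma \ref{part} records.
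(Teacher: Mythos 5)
Your proposal is correct and follows essentially the paper's route: both arguments pick two distinct values in the essential range of $\phi$, apply Lemma \ref{part} to partition the preimages of disjoint neighborhoods into countably many sets of finite positive measure, and feed normalized functions built from indicator functions into Lemma \ref{blockn}. The only (harmless) difference is that you pair the pieces and solve for the coefficients $(s,t)$ to hit $\alpha$ directly, whereas the paper first forms the diagonal compression $M_{1}$ with eigenvalues in the two disjoint balls and then invokes Theorem \ref{diag}, which carries out that same pairing-and-balancing step internally.
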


\begin{proof} Let $x$ and $y$ be distinct points in the essential range of $\phi$ with $x<y$. Let $B_{x}$ and $B_{y}$ be open balls with disjoint closures containing $x$ and $y$, respectively. By Lemma \ref{part}, there exist partitions $\{E_{n}\}_{n=1}^{\infty}$ and $\{F_{n}\}_{n=1}^{\infty}$ of $\phi^{-1}(B_{x})$ and $\phi^{-1}(B_{y})$ respectively, with $0<\mu(E_{n}),\mu(F_{n})<\infty$ for each $n\in\N$. For each $n\in\N$ set $a_{n} = \mu(E_{n})^{-1/2}$ and $b_{n}=\mu(F_{n})^{-1/2}$. Define the functions
\[f_{n} = a_{n}\chi_{E_{n}}\quad\text{and}\quad g_{n}=b_{n}\chi_{F_{n}},\]
where $\chi_{E}$ is the characteristic function of the set $E$. Let $P_{1}$ be the projection onto $\Hil_{1}=\overline{\lspan}\{f_{n},g_{n}\}_{n=1}^{\infty}$, and define $M_{1} = P_{1}M_{\phi}P_{1}$. By Lemma \ref{blockn}, for each $n\in \N$ we have $M_{1}f_{n} = \lambda_{n}f_{n}$ and $M_{1}g_{n} = \mu_{n}g_{n}$, where
\[\lambda_{n} = \frac{1}{\mu(E_{n})}\int_{E_{n}}\phi\,d\mu\quad\text{and}\quad \eta_{n} = \frac{1}{\mu(F_{n})}\int_{F_{n}}\phi\,d\mu.\]
For every $n\in\N$ we have $\phi(p)\in B_{x}$ and $\phi(q)\in B_{y}$ for almost all $p\in E_{n}$ and $q\in F_{n}$. This implies $\lambda_{n}\in B_{x}$ and $\eta_{n}\in B_{y}$ for every $n\in\N$.

The operator $M_{1}:\Hil_{1}\to\Hil_{1}$ is a diagonalizable operator on an infinite dimensional Hilbert space. Since $M_{1}$ has infinitely many eigenvalues (with multiplicity) in each of the disjoint closed intervals $\overline{B_{x}}$ and $\overline{B_{y}}$, it is not a translate of a compact operator. By Theorem \ref{diag} there is an infinite dimensional subspace $\Hil_{0}\subset \Hil_{1}$ and a constant $\alpha>0$ such that $QM_{1}Q=\alpha Q$, where $Q:\Hil_{1}\to\Hil_{1}$ is the projection onto $\Hil_{0}$. Letting $P:L^{2}(X,\mu)\to L^{2}(X,\mu)$ be the projection onto $\Hil_{0}$ yields $PM_{\phi}P=\alpha P$.
\end{proof}

\begin{proof}[Proof of Theorem \ref{oper}] First, assume that there is a projection $P$ with $\dim\ran P=\infty$ and a constant $\alpha> 0$ such that $PEP=\alpha P$. Theorem \ref{cpt} implies that $E$ is not a translate of an operator in $FK(\Hil)$.

Now, assume $E$ is a noncompact positive operator that is not a translate of an operator in $FK(\Hil)$. By the Spectral Theorem (Theorem \ref{specthm}) there is a $\sigma$-finite measure space $(X,\mu)$, a function $\phi\in L^{\infty}(X,\mu)$ and a unitary $U:\Hil\to L^{2}(X,\mu)$ so that $M_{\phi}=UEU^{\ast}$. Let $\sigma_{p}(E)$ be the set of eigenvalues of $E$, which is also the set of eigenvalues of $M_{\phi}$. Define the sets
\[X_{p}=\bigcup_{y\in\sigma_{p}(E)}\phi^{-1}(\{y\})\]
and $X_{c} = X\setminus X_{p}$. Since $L^{2}(X,\mu)$ is separable, the set $\sigma_{p}(E)$ is at most countable, and thus both $X_{p}$ and $X_{c}$ are measurable. Both $(X_{p},\mu)$ and $(X_{c},\mu)$ are $\sigma$-finite measure spaces. Let $\phi_{p}$ and $\phi_{c}$ be the restrictions of $\phi$ to $X_{p}$ and $X_{c}$, respectively. 

If $M_{\phi_{c}}:L^{2}(X_{c},\mu)\to L^{2}(X_{c},\mu)$ is the zero operator, then $M_{\phi}$ is unitarily equivalent to $M_{\phi_{p}}$. Since $M_{\phi_{p}}$ is diagonalizable, both $M_{\phi}$ and $E$ are also diagonalizable. In this case Theorem \ref{diag} gives the desired conclusion.

We may now assume that $M_{\phi_{c}}\neq 0$. By construction, $\phi_{c}$ is not constant on any set of positive measure. By Theorem \ref{cont} there is a projection $P_{c}$ onto an infinite dimensional subspace $\Hil_{c}\subset L^{2}(X_{c},\mu)$ and a constant $\alpha>0$ such that $P_{c}M_{\phi_{c}}P_{c} = \alpha P_{c}$. Let
\[\Hil_{0}= \{f\in L^{2}(X,\mu): f|_{X_{c}}\in \Hil_{c}\text{ and } f(x)=0\text{ a.e. }x\in X_{p}\}.\]
It is clear that $\Hil_{0}$ is infinite dimensional. If $P_{0}$ is the projection onto $\Hil_{0}$, then $P_{0}M_{\phi}P_{0} = \alpha P_{0}$. The operator $P=U^{\ast}P_{0}U$ is the projection onto the infinite dimensional subspace $U^{\ast}\Hil_{0}$ and
\[PEP = U^{\ast}P_{0}UEU^{\ast}P_{0}U = U^{\ast}P_{0}M_{\phi}P_{0}U = U^{\ast}(\alpha P_{0})U = \alpha P.\]

\end{proof}

In the proof of Theorem \ref{oper}, most of the projections we constructed have infinite dimensional kernel. To complete this paper we show that we may take the projection to have finite dimensional kernel if and only if $E$ is a translate of a finite rank operator. 

\begin{thm}\label{finker} Let $E$ be a positive operator on a Hilbert space $\Hil$, let $\alpha\geq 0$, and let $N\in\N$. There exists a projection $P$ with $\dim\ker P=N$ and $PEP=\alpha P$ if and only if $E-\alpha$ is a finite rank operator with $\dim\ran(E-\alpha)\leq 2N$.
\end{thm}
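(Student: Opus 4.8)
The plan is to replace $E$ by the self-adjoint operator $F:=E-\alpha$ (self-adjoint because $E$, being positive, is self-adjoint and $\alpha$ is real) and to note that $PEP=\alpha P$ is equivalent to $PFP=0$. So the question becomes: when is there a projection $P$ with $\dim\ker P=N$ and $PFP=0$?

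For the ``only if'' direction I would take such a $P$, put $Q=I-P$ (a projection of rank $N$), and expand $E=(P+Q)E(P+Q)$. Using $PEP=\alpha P$ together with $PEQ=PE-PEP=PE-\alpha P$, this collapses to the identity $E-\alpha I=PEQ+Q(E-\alpha I)$. Each summand has range of dimension at most $N$ (the first because it factors through $\ran Q$, the second because its range lies inside $\ran Q$), so $E-\alpha$ is finite rank with $\dim\ran(E-\alpha)\le 2N$. This direction is routine.

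For the ``if'' direction I would imitate the pairing construction in the proof of Theorem~\ref{diag}. Diagonalize the compact self-adjoint operator $F$: let $u_1,\dots,u_p$ be eigenvectors with positive eigenvalues $\mu_1,\dots,\mu_p$ and $v_1,\dots,v_m$ eigenvectors with negative eigenvalues $-\nu_1,\dots,-\nu_m$, so $p+m=\dim\ran F$ and the orthogonal complement $\ker F$ is infinite dimensional; the eigenvalues of $E$ are then $\alpha+\mu_i>\alpha$ on $\lspan\{u_i\}$, $\alpha-\nu_j<\alpha$ on $\lspan\{v_j\}$, and $\alpha$ on $\ker F$ (positivity of $E$ moreover forces $\nu_j\le\alpha$). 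Assuming $p\le m$, for each $i\le p$ I would pick $a_i,b_i\in[0,1]$ with $a_i^2+b_i^2=1$ and $a_i^2(\alpha+\mu_i)+b_i^2(\alpha-\nu_i)=\alpha$ (solvable since $\alpha$ lies strictly between $\alpha-\nu_i$ and $\alpha+\mu_i$), set $w_i=a_iu_i+b_iv_i$, and fix an orthonormal sequence $\{z_k\}_{k=1}^\infty$ in $\ker F$ whose closed span has codimension $N-m$ in $\ker F$. Taking $P$ to be the projection onto $\Hil_0:=\overline{\lspan}\bigl(\{w_i\}_{i=1}^p\cup\{z_k\}_{k=1}^\infty\bigr)$, the vectors $w_i,z_k$ have pairwise disjoint supports in the eigenbasis of $E$, so Lemma~\ref{block} gives that each is an eigenvector of $PEP$ with eigenvalue $\alpha$, i.e.\ $PEP=\alpha P$. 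Finally $\ker P=\Hil_0^\perp$ decomposes as $\lspan\{v_{p+1},\dots,v_m\}$, plus the $p$-dimensional orthocomplement of $\lspan\{w_i\}$ inside $\lspan\{u_i,v_i:i\le p\}$, plus the orthocomplement of $\overline{\lspan}\{z_k\}$ inside $\ker F$, whence $\dim\ker P=(m-p)+p+(N-m)=N$.

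The step I expect to be the obstacle is this last codimension count: it works only when $N\ge m=\max(p,m)$, so that a sequence $\{z_k\}$ with codimension $N-m$ inside $\ker F$ exists. Thus the construction uses not merely $\dim\ran(E-\alpha)=p+m\le 2N$ but the sharper fact that the positive and negative parts of $E-\alpha$ each have dimension at most $N$ (the two conditions coincide exactly when $|p-m|\le 1$). Before committing, I would want to reconcile this: either check that the hypotheses at hand force the signature to be balanced, or sharpen the ``only if'' direction by observing that $PFP=0$ forces the orthogonal projection of $\ran P$ onto $\ran F$ to be a totally isotropic subspace for the quadratic form $x\mapsto\langle Fx,x\rangle$, hence of dimension at most $\min(p,m)$, which already yields $\dim\ker P\ge\max(p,m)$ — so that the biconditional is stated with this refined condition on $E-\alpha$.
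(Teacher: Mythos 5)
Your ``only if'' argument is correct and is genuinely different from the paper's: writing $Q=I-P$ and using the identity $E-\alpha=PEQ+Q(E-\alpha)$ gives $\dim\ran(E-\alpha)\leq 2\,\mathrm{rank}\,Q=2N$ in one line, whereas the paper decomposes $\ran P=V\oplus W$ with $V=\ran P\cap\ker(E-\alpha)$, proves $\dim W\leq\dim\ker P$ by a linear-dependence argument, and then observes $\ran(E-\alpha)\subseteq V^{\perp}=W\oplus\ker P$. Both yield the same bound; yours is shorter and cleaner.

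Your worry about the ``if'' direction is well founded, and the way to reconcile it is the second option you list, not the first: the hypothesis $\dim\ran(E-\alpha)\leq 2N$ does \emph{not} force a balanced signature, and with the kernel dimension required to be exactly $N$ the stated equivalence fails. For example, take $E=\alpha I+K$ with $K\geq 0$ of rank exactly $2N$: then $PEP=\alpha P$ forces $PKP=0$, hence $K^{1/2}P=0$, hence $\ran P\subseteq\ker K$ and $\dim\ker P\geq 2N>N$. Note that the paper's own ``if'' proof does not produce $\dim\ker P=N$ either; it takes $P$ to be the projection onto $\ker(E-\alpha)$, which gives $PEP=\alpha P$ with $\dim\ker P=\dim\ran(E-\alpha)\leq 2N$, i.e.\ it really establishes the ``finite codimension'' version of the statement (a tight projection onto a subspace of finite codimension exists iff $E$ is a translate of a finite-rank operator), which is all that is used elsewhere in the paper. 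Your refined equivalence is the correct exact-$N$ statement: such a $P$ exists iff $E-\alpha$ is finite rank with $\mathrm{rank}\,(E-\alpha)_{+}\leq N$ and $\mathrm{rank}\,(E-\alpha)_{-}\leq N$. Your pairing construction proves sufficiency (it needs $\dim\ker(E-\alpha)\geq N-m$, automatic here since $\Hil$ is infinite dimensional and $E-\alpha$ is finite rank), and necessity follows from your isotropy observation once you add the small missing rank--nullity step: $\ran(E-\alpha)\cap\ran P$ is isotropic for the form $x\mapsto\langle (E-\alpha)x,x\rangle$, hence of dimension at most $\min(p,m)$, so the compression of $\ran(E-\alpha)$ into $\ker P$ has rank at least $\max(p,m)$, giving $\dim\ker P\geq\max(p,m)$. (Even more simply: if $\dim\ker P<m$, then the negative eigenspace meets $\ran P$ nontrivially, contradicting $P(E-\alpha)P=0$.)
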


\begin{proof} First, assume that $E-\alpha$ is a finite rank operator with $\dim\ran(E-\alpha)\leq 2N$. If $P$ is the projection onto $\ker(E-\alpha)$, then $\dim\ker P = \dim\ran (E-\alpha)\leq 2N$ and $PEP=\alpha P$.

Now, assume that the projection $P$ exists. Define the subspace 
\[V = \{v\in\ran P:Ev=\alpha v\} = \ran P\cap \ker(E-\alpha).\]
Let $\{e_{i}\}_{i\in I\cup J}$ be an orthonormal basis for $\ran P$ such that $\{e_{i}\}_{i\in J}$ is an orthonormal basis for $V$. Set $W = \overline{\lspan}\{e_{i}\}_{i\in I}$. We have the orthogonal decomposition $\Hil = V\oplus W\oplus \ker P.$

First we show that $\dim W\leq \dim\ker P$. Assume toward a contradiction that $|I| = \dim W>\dim\ker P$. For each $i\in I$
\[\alpha e_{i} = \alpha Pe_{i} = PEPe_{i} = PEe_{i}.\]
This implies that for each $i\in I$ there is some $h_{i}\in \ker P$ such that $Ee_{i} = \alpha e_{i} + h_{i}$. The assumption that $|I|>\dim\ker P$ implies that the sequence $\{h_{i}\}_{i\in I}$ is dependent. There is a sequence of scalars $\{\beta_{i}\}_{i\in I}$, not all zero, such that $\sum_{i\in I}\beta_{i}h_{i} = 0$. Set $f = \sum_{i\in I}\beta_{i}e_{i}$. Since there is some $i\in I$ such that $\beta_{i}\neq0$, we see that $f\neq 0$. It is clear that $f\in W$. Next, we calculate
\[Ef = E\left(\sum_{i\in I}\beta_{i}e_{i}\right) = \sum_{i\in I}\beta_{i}(\alpha e_{i} + h_{i}) = \alpha\sum_{i\in I}\beta_{i}e_{i} = \alpha f.\]
This shows that $f\in V$, and thus $f$ is a nonzero vector in $V\cap W=\{0\}$. This contradiction shows that $\dim W\leq \dim\ker P$.

Next, let $y\in\ran(E-\alpha)$. There is some $x\in\Hil$ such that $(E-\alpha)x = y$. For any $v\in V$
\[\langle y,v\rangle = \langle(E-\alpha)x,v\rangle = \langle x,(E-\alpha)v\rangle = \langle x,0\rangle = 0.\]
This shows that $\ran(E-\alpha)\subseteq V^{\perp} = W\oplus\ker P$. Since we have already shown that $W$ is finite dimensional, this shows that $E-\alpha$ is finite rank. Finally, we have
\[\dim\ran(E-\alpha)\leq \dim V^{\perp}= \dim W+\dim\ker P\leq 2\dim\ker P.\]
\end{proof}

\end{document}